\documentclass[11pt, leqno]{article}
\usepackage[]{geometry}
\usepackage[utf8]{inputenc}
\usepackage{tikz-cd}
\usepackage{cite}

\usepackage{amsmath,amsthm,amscd,amssymb,amsbsy,amstext,mathtools}
\usepackage{pdfpages}
\usepackage{enumerate}
\usepackage{mathrsfs}
\usepackage[utf8]{inputenc}
\usepackage{geometry}[margin=1in]
\numberwithin{equation}{section}
\usepackage{amsfonts}
\usepackage[T1]{fontenc}

\usepackage{hyperref}
\usepackage{url}

\usepackage{authblk}

\newtheorem{theorem}{Theorem}[section]
\newtheorem{lemma}[theorem]{Lemma}
\newtheorem{proposition}[theorem]{Proposition}

\newtheorem{problem}[theorem]{Problem}

\theoremstyle{definition}

\newtheorem{example}[theorem]{Example}

\theoremstyle{remark}
\newtheorem{remark}[theorem]{Remark}

\newcommand{\norm}[1]{\| {#1}\| }
\newcommand{\diam}{\mathsf{diam}}
\newcommand{\set}[1]{\left\{#1\right\}}

\newcommand{\abs}[1]{|#1|}

\newcommand{\Rn}{\mathbb{R}^n}

\newcommand{\grad}{\nabla}
\newcommand{\R}{\mathbb{R}}
\newcommand{\brac}[1]{\left(#1\right)}
\newcommand{\jbrac}[1]{\langle#1\rangle}
\newcommand{\ip}[1]{\jbrac{#1}}

\renewcommand{\epsilon}{\varepsilon}
\newcommand{\eps}{\epsilon}
\renewcommand{\subset}{\subseteq}

\renewcommand{\phi}{\varphi}
\newcommand{\T}{\mathbb T}
\newcommand{\qaq}{\text{\quad and \quad }}

\newcommand{\nsi}{N_\phi}

\newcommand{\LAcustom}[2]{%
  \begingroup
    \renewcommand{\theequation}{#2}%
    \refstepcounter{equation}%
    \text{(\theequation)}%
    \label{#1}%
  \endgroup
}
\newcommand{\LAQcustom}[3]{%
  \begin{itemize}
    \item[\LAcustom{#1}{#2}] #3
  \end{itemize}
}

\title{On a family of one-dimensional oscillation inequalities}
\author[1]{Fushuai Jiang}
\date{{\small
MSC 2020: 34C10, 46E35, 46E30
\\
\small Keywords: Sturm oscillation theorem, Gagliardo-Nirenberg estimate, uncertainty principle
}}
\affil[1]{Department of Mathematics, City University of Hong Kong}

\begin{document}
\maketitle

\begin{abstract}
Let $\varphi$ be a nonzero continuous mean-zero function on the one-dimensional torus and let $N_\varphi$ be the number of times that $\varphi$ changes signs. 
We prove the sharp family of oscillation inequalities of the types
\begin{equation*}
     N_\varphi\|\varphi\|_{\dot W^{-1,s}}
 \gtrsim_{p,s}
 \frac{\|\varphi\|_1^{1+p'/s}}{\|\varphi\|_p^{p'/s}}
 \, \, \text{ and } \, \,
 (N_\varphi)^\alpha\|\phi\|_{\dot W^{-1,s}}
 \gtrsim_{p,q,r,s,\alpha}
 \frac{\|\varphi\|_p\|\varphi\|_q}{\|\varphi\|_r}.
\end{equation*}
This resolves an open problem posed by S. Steinerberger and strengthens the original estimate. The proof is independent of optimal transport and is based on a Gagliardo-Nirenberg-type estimate as well as a quotient-space characterization of the negative Sobolev seminorm. As applications, we derive several oscillation estimates related to Fourier projection, the uncertainty principle, and the Sturm-Hurwitz theorem. 

\end{abstract}

\section{Introduction}

Let $\T$ denote the normalized one-dimensional torus. Throughout this paper, $\phi\in C(\T) = C(\T,\R)$ denotes a nonzero function with zero mean:
\begin{equation*}
    \int_\T \phi(x)d x=0.
\end{equation*}
In \cite[Theorem 4]{steinerberger2021wasserstein}, S.~Steinerberger proved the following Sturm oscillation-type uncertainty principle.
\begin{equation*} 
\# Z_\phi \left( \sum_{k=1}^{\infty} \frac{|\widehat{\phi}(k)|^2}{k^2} \right)^{1/2} \gtrsim \frac{\|\phi\|_{L^1(\mathbb T)}^2} {\|\phi\|_{L^\infty(\mathbb T)}}
\text{\quad where \quad}
Z_\phi:=\set{x\in\T:\phi(x)=0}.  
\end{equation*}
The Fourier sum on the left-hand side is, up to a normalization constant, equal to the homogeneous negative Sobolev norm
\begin{equation*} 
\left( \sum_{k=1}^{\infty} \frac{|\widehat{\phi}(k)|^2}{k^2} \right)^{1/2} \approx \|\phi\|_{\dot H^{-1}(\mathbb T)} = \|\phi\|_{\dot W^{-1,2}(\mathbb T)}. 
\end{equation*} 
Consequently, \eqref{eq:Steinerberger} may be written as 
\begin{equation} 
\# Z_\phi  \norm{\phi}_{\dot H^{-1}(\T)} \gtrsim \frac{
\norm{\phi}_{L^1(\mathbb T)}^2}{
\norm{\phi}_{L^\infty(\mathbb T)}}. 
\label{eq:Steinerberger}
\end{equation} 
On the other hand, the norm $\dot H^{-1}$ is also the infinitesimal metric associated with the quadratic Wasserstein distance. 
More precisely, if $\lambda$ denotes normalized Lebesgue measure on $\T$ and $t$ is so small that $1+t\phi\geq 0$, then 
\begin{equation} 
\mathcal W_2\brac{\lambda,(1+t\phi)\lambda} 
= 
\abs{t}\norm{\phi}_{\dot H^{-1}(\T)} + 
o(\abs{t}) 
\text{\quad as $t\to0$.}
\end{equation} 
As such, \eqref{eq:Steinerberger} can also be interpreted as an uncertainty principle for the linearized $\mathcal W_2$ geometry around the uniform measure; see \cite{peyre1,peyre2} for comparisons between $\mathcal W_2$ and weighted homogeneous negative Sobolev norms.

In this paper, we answer an open question posed in the same paper \cite[Page 317]{steinerberger2021wasserstein} by providing a sharp exponent classification of estimates of the type \eqref{eq:Steinerberger}. The main theorem is as below. 

\begin{theorem}\label{thm:main}
Fix $1\leq s<\infty$, $\alpha\geq1$, and $1\leq p,q,r\leq\infty$. Then
\begin{equation}
    \nsi^\alpha\norm{\phi}_{\dot W^{-1,s}(\T)}
 \gtrsim_{p,q,r,s,\alpha}
 \frac{\norm{\phi}_{L^p(\T)}\norm{\phi}_{L^q(\T)}}{\norm{\phi}_{L^r(\T)}}
 \label{eq:pqr}
\end{equation}
holds for all $\phi \in C(\T)$ with zero mean if and only if
\begin{equation}
 \alpha\geq1
 \qaq
 \frac{1}{p}+\frac{1}{q}-\frac{1}{r}\geq1+\frac{1}{s}.
 \label{eq:two num param}
\end{equation}
\end{theorem}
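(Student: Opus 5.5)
The plan splits into sufficiency and necessity.

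\textbf{Sufficiency.} Since $\nsi\geq 1$ (a nonzero mean-zero continuous function changes sign at least twice, so in fact $\nsi\ge 2$), it suffices to treat $\alpha=1$. By H\"older/log-convexity of $L^p$ norms on the probability space $\T$, the right-hand side of \eqref{eq:pqr} is monotone in the right direction. Increasing $1/p$ or $1/q$, or decreasing $1/r$, can only decrease $\norm{\phi}_p\norm{\phi}_q/\norm{\phi}_r$ up to interpolation. I would therefore reduce to a canonical endpoint family, namely
\[
\nsi\norm{\phi}_{\dot W^{-1,s}} \gtrsim \frac{\norm{\phi}_1^{1+p'/s}}{\norm{\phi}_p^{p'/s}},
\]
the first inequality in the abstract. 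Here one factor is $L^1$, and the pair $(q,r)$ is absorbed via interpolation $\norm{\phi}_q\le \norm{\phi}_1^{\theta}\norm{\phi}_r^{1-\theta}$ on the boundary case $1/p+1/q-1/r=1+1/s$.

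To prove the endpoint estimate, I would use the quotient characterization
\[
\norm{\phi}_{\dot W^{-1,s}}=\inf_c\norm{\Phi-c}_{L^s},
\]
where $\Phi$ is a primitive of $\phi$ (well-defined on $\T$ because $\phi$ has mean zero). The key steps are:
\begin{enumerate}
\item Split $\T$ into the $\nsi$ intervals $I_j$ on which $\phi$ has constant sign.
\item On each $I_j$, $\Phi$ is monotone with total variation $m_j=\int_{I_j}|\phi|$, and $\sum_j m_j=\norm{\phi}_1$.
\item A Gagliardo--Nirenberg-type estimate on a single interval: a monotone function with increment $m$ and derivative bounded in $L^p$ must stay at distance $\gtrsim m$ from any constant $c$ on a set of length $\gtrsim (m/\norm{\phi}_{L^p(I_j)})^{p'}$. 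This follows by H\"older, since $m'\le |E|^{1/p'}\norm{\phi}_{L^p(E)}$ for the subset $E$ where $\Phi$ is within $m/4$ of $c$.
\item Hence $\norm{\Phi-c}_{L^s(I_j)}^s\gtrsim m_j^{s+p'}/\norm{\phi}_p^{p'}$.
\item Sum over $j$ and use convexity (Jensen/H\"older with $\nsi$ terms) to get
\[
\norm{\Phi-c}_s^s\gtrsim \nsi^{-(s+p'-1)}\,\norm{\phi}_1^{s+p'}/\norm{\phi}_p^{p'}.
\]
\end{enumerate}
Taking $s$-th roots yields a power of $\nsi$ that must be compared with $\nsi^1$. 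I expect this exponent bookkeeping to be the main obstacle: the naive convexity loses a power $\nsi^{(s+p'-1)/s}$, which exceeds 1. The fix I would try first is to use that $\Phi-c$ must also be large on intervals where the $m_j$ are large. One then sums $m_j^{1+p'/s}$ with the weights reorganized, or alternatively applies the one-interval bound only to the single largest $m_j\ge\norm{\phi}_1/\nsi$, together with $\norm{\phi}_{L^p(I_j)}\le\norm{\phi}_p$. This gives
\[
\norm{\Phi-c}_s\gtrsim (\norm{\phi}_1/\nsi)^{1+p'/s}\norm{\phi}_p^{-p'/s},
\]
and the leftover $\nsi^{p'/s}$ would need to be recovered by a better counting argument. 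I would check the sharp examples to decide which version is correct.

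\textbf{Necessity.} I would use explicit test functions.
\begin{itemize}
\item For $\alpha\ge1$: take $\phi(x)=\sin(2\pi Nx)$. Then $\nsi\approx N$, $\norm{\phi}_{\dot W^{-1,s}}\approx N^{-1}$, and all $L^p$ norms are $\approx1$. So $\nsi^\alpha N^{-1}\gtrsim1$ forces $\alpha\ge1$.
\item For the exponent condition: take a fixed two-sign-change profile concentrated at scale $\delta$, $\phi(x)=\psi(x/\delta)$ with $\psi$ a mean-zero bump. Then $\norm{\phi}_p\approx\delta^{1/p}$ and $\norm{\phi}_{\dot W^{-1,s}}\approx\delta^{1+1/s}$. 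Letting $\delta\to0$ forces $1+1/s\le 1/p+1/q-1/r$.
\end{itemize}
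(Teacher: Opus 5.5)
Your necessity tests, and your reduction of sufficiency to the critical estimate
\[
\nsi\norm{\phi}_{\dot W^{-1,s}}\gtrsim \frac{\norm{\phi}_1^{1+r'/s}}{\norm{\phi}_r^{r'/s}},
\]
are the same as the paper's. That reduction goes through $\nsi^\alpha\ge\nsi$ and interpolation of $\norm{\phi}_p,\norm{\phi}_q$ between $L^1$ and $L^r$. Your per-interval bound is also fine, with one correction: apply H\"older on a component of $I_j\setminus E$, across which the monotone primitive varies by at least $m_j/4$, not on $E$ itself.

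The genuine gap is that you never prove the critical estimate for $1<r<\infty$. Your summation gives $\nsi^{1+(r'-1)/s}$ instead of $\nsi$, and the largest-interval variant gives $\nsi^{1+r'/s}$. You leave the choice to ``checking the sharp examples''. This is not cosmetic, because the theorem claims sufficiency with exactly $\alpha=1$. For $r<\infty$ the critical estimate implies, and can be far stronger than, the $r=\infty$ one: its right-hand side is larger since $\norm{\phi}_r^{r'}\le\norm{\phi}_1^{r'-1}\norm{\phi}_\infty$, by an unbounded factor for spiky $\phi$. So it cannot be recovered from $r'=1$, the only case where your counting loses nothing.

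The loss comes from your step 4, where you replace $A_j:=\norm{\phi}_{L^r(I_j)}$ by the global $\norm{\phi}_r$ on every sign interval. Keep the local norms and use $\sum_jA_j^r=\norm{\phi}_r^r$. With $\delta=1+s/r'$, write $m_j^\delta=(m_j^{s+r'}/A_j^{r'})^{1/r'}(A_j^r)^{1/r}$. Then apply H\"older in $j$ with exponents $(r',r)$, followed by Jensen:
\[
\nsi^{1-\delta}\norm{\phi}_1^{\delta}\le\sum_j m_j^{\delta}\le\Bigl(\sum_j\frac{m_j^{s+r'}}{A_j^{r'}}\Bigr)^{1/r'}\norm{\phi}_r .
\]
Since $(\delta-1)r'=s$ and $\delta r'=s+r'$, this gives $\sum_j m_j^{s+r'}/A_j^{r'}\ge\nsi^{-s}\norm{\phi}_1^{s+r'}/\norm{\phi}_r^{r'}$. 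Taking $s$-th roots yields exactly $\nsi^{-1}$.

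To see why the local norms matter: when the mass is spread evenly, $A_j^{r'}=\nsi^{-r'/r}\norm{\phi}_r^{r'}=\nsi^{1-r'}\norm{\phi}_r^{r'}$. That is precisely the factor you discarded. This aggregation step is the paper's Lemma~\ref{lem:sum}.
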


In Theorem \ref{thm:main} and the rest of the paper, $\nsi$ denotes the number of times that $\phi$ changes sign. We obviously have $\nsi \leq \#Z_\phi$, though the latter could be significantly larger. This is more or less inconsequential in classical Sturm-Liouville theory (see, e.g., \cite{zettl2005sturm}) as all interior zeroes of a nontrivial eigenfunction are simple. However, for general continuous functions, we make such a distinction, thinking of a sign change as a more stable event than having a zero. The classification for the exponent $\alpha$ adds little to the picture, but it helps control repeated zeros. 

The heart of the matter in proving Theorem \ref{thm:main} is to obtain the following critical-exponent $L^1$-$L^r$ estimate. 
\begin{equation}
     \nsi\norm{\phi}_{\dot W^{-1,s}(\T)}
 \gtrsim_{s,r}
 \frac{\norm{\phi}_{L^1(\T)}^{\beta}}
 {\norm{\phi}_{L^r(\T)}^{\beta-1}}
 \qaq
 \beta = 1 + \frac{r'}{s}.
 \label{eq:beta critical intro}
\end{equation}
Heuristically speaking, any mean-zero function with small negative Sobolev norm must change sign many times, unless its mass is highly concentrated. See Theorem \ref{thm:critical L1Lr}.
% As in \cite[Corollary to Theorem 4]{steinerberger2021wasserstein}, \eqref{eq:beta critical intro} also admits a formulation without reference to negative Sobolev spaces. Let $F:\T \to \R$ with mean $F_\T$, then
% \begin{equation}
%     (\text{\# monotonicity reversals of $F$})
%     \cdot \norm{F-F_\T}_{L^{s'}(\T)} \gtrsim \frac{\norm{F'}_{L^1(\T)}^{\beta}}{\norm{F'}_{L^{\beta - 1}(\T)}} \quad , \quad \beta = 1 + \frac{r'}{s}.
%     \label{eq:monotone statement}
% \end{equation}

The proofs leading to \eqref{eq:beta critical intro} and Theorem \ref{thm:main} will be given in Section \ref{sect:main}. Our argument bypasses the Wasserstein estimate in \cite{steinerberger2021wasserstein} and purely relies on a Gagliardo-Nirenberg-type estimate along with a convenient characterization of the negative-order Sobolev space $\dot W^{-1,s}$. Once we establish \eqref{eq:beta critical intro}, we can use a series of interpolation arguments to obtain Theorem \ref{thm:main}.

We also discuss a few applications of Theorem \ref{thm:main} and estimate \eqref{eq:beta critical intro}. They include 
\begin{itemize}
    \item low-frequency projection lower bounds in the Fourier-Lebesgue space (Theorem \ref{thm:low frequency}),
    \item an uncertainty principle (Theorem \ref{thm:uncertainty}), and
    \item an improved Sturm-Hurwitz oscillation bound with small low-frequency contamination (Theorem \ref{thm:sturm contamination}).
\end{itemize}

As a curious remark, we note that \eqref{eq:beta critical intro}, with $(r,s) = (\infty,2)$, is a strictly stronger estimate than \eqref{eq:Steinerberger} in principle, since $\norm{\phi}_{L^1(\Omega,\mu)}\leq \mu(\Omega) \norm{\phi}_{L^\infty(\Omega,\mu)}$ for a finite measure space $(\Omega,\mu)$. The situation in practice is a bit trickier. For the family of functions considered in \cite{steinerberger2021wasserstein}
\begin{equation*}
    \psi_{\eps,k}(x) = \eps \sin(k x) + \sin(k^2 x),
\end{equation*}
both estimates \eqref{eq:beta critical intro} and \eqref{eq:Steinerberger} predict $\approx k^2$ zeros, while the Sturm-Hurwitz theorem (see, e.g., \cite{Sturm1,Sturm2}) predicts $\gtrsim k$ zeros. On the other hand, we demonstrate in Section \ref{sect:zero count} of two sequences with the same exact number $2k$ of sign-changing zeros but very different mass distributions. The first $\set{\phi_{k}}$ is of the form
\begin{equation*}
    \phi_k(x) = -2k^2\pi\sin(2k\pi x)\exp\left[k^4(\cos(2 k \pi  x) - 1)\right],
\end{equation*}
For this family, Sturm-Hurwitz gives the exact lower bound, estimate \eqref{eq:beta critical intro} predicts $\gtrsim k$ zeroes, while \eqref{eq:Steinerberger} only gives $\gtrsim 1$. We contrast this with the family
\begin{equation*}
    \psi_k(x) = -2k^2\pi\sin(2k\pi x)\exp\left[k^4(\cos(2 \pi  x) - 1)\right].
\end{equation*}
The only change is $\cos(2k\pi x) \to \cos(2\pi x)$ inside the exponential.
Though also having exactly $2k$ sign changes, all significant mass is now confined to a single interval of length $\approx k^{-2}$, and the remaining zeros occur where the function is exponentially small. Consequently, our estimate \eqref{eq:beta critical intro} and Sturm-Hurwitz guarantee only $\gtrsim 1$ number of sign changes, while the earlier estimate \eqref{eq:Steinerberger} is asymptotically vacuous. 

These examples shed further light on both the nature and limitation of estimates of the type \eqref{eq:Steinerberger} and \eqref{eq:pqr}, as the negative-Sobolev inequality measures oscillations that carry substantial mass and is insensitive to small-amplitude oscillations.

Finally, we conjecture the higher-dimensional analogue of Theorem \ref{thm:main} in Section \ref{sect:higher dimensional}, for which the special case $(r,s) = (\infty,1)$ has been studied in \cite{steinerberger-interface,steinerberger2020metric, carroll2020enhanced}.

\subsection{Acknowledgment}

The author is supported by a grant from
the City University of Hong Kong (Project No. 7200844).

\subsection{Notation and convention}

Let $s,t, \ldots$ be a list of parameters. Let $A,B \geq 0$. We write $A\lesssim_{s,t, \ldots} B$ if there exists a constant $C \geq 0$ depending only on $s,t, \ldots$ such that $A\leq CB$. If $A\lesssim_{s,t, \ldots} B$ and $B\lesssim_{s,t, \ldots} A$, we write $A\approx_{s,t, \ldots} B$. When the list of parameters is clear from context, we omit the subscript $s,t, \ldots$.

Throughout this paper, we assume $\phi\in C(\T,\R)$ is nonzero with mean zero. We use $\nsi$ to denote the number of times $\phi$ changes sign. We shall assume $\nsi<\infty$, for otherwise, the inequalities of interest are vacuous.

% We use $\abs{\cdot}$ to denote the Lebesgue measure and $\#$ to denote the counting measure. 
For $1\leq s<\infty$, let $s' \in (1,\infty]$ be its H\"older conjugate. We use $W^{1,s'}(I)$ to denote the space of functions on an interval $I$ whose distributional derivative belongs to $L^{s'}(I)$. We define $W^{-1,s}$ via the usual duality pairing
\begin{equation*}
    \norm{\phi}_{\dot W^{-1,s}(\T)} := \sup_{u\in C^\infty(\T) ,\ 
    \norm{u'}_{L^{s'}(\T)}\leq 1
    }\abs{\ip{\phi,u}}.
\end{equation*}
However, we will use a much more convenient formula for $\dot W^{-1,s}$. See Proposition \ref{prop:sobolev norm} below.

\section{Main arguments}
\label{sect:main}

\subsection{A convenient Sobolev seminorm}

\begin{proposition}\label{prop:sobolev norm}
    Let $1 \leq s \leq \infty$ and let $s'$ be its H\"older conjugate. Let $\phi$ be a periodic distribution satisfying $\ip{\phi,1} = 0$ and assume that $\phi = F'$ for some $F \in L^s(\T)$. Then
    \begin{equation}
     \norm{\phi}_{\dot W^{-1,s}(\T)}
 =\inf_{c\in\R}\norm{F-c}_{L^s(\T)}.
 \label{eq:Sobolev norm}
\end{equation}
\end{proposition}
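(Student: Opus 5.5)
The plan is to prove the two inequalities separately, using integration by parts for one direction and duality in $L^s$ for the other.

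\textbf{The inequality $\le$.} Take $u\in C^\infty(\T)$ with $\norm{u'}_{L^{s'}}\le1$. Since $\phi=F'$ in the distributional sense, $\ip{\phi,u}=-\ip{F,u'}$. Because $u'$ has zero mean (it is a derivative of a periodic function), we also have $\ip{F,u'}=\ip{F-c,u'}$ for every constant $c$. H\"older's inequality then gives
\begin{equation*}
\abs{\ip{\phi,u}}\le\norm{F-c}_{L^s}.
\end{equation*}
Taking the supremum over $u$ and the infimum over $c$ yields $\norm{\phi}_{\dot W^{-1,s}}\le\inf_c\norm{F-c}_{L^s}$.

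\textbf{The inequality $\ge$.} This is a duality statement for a quotient space. Let
\begin{equation*}
L^{s'}_0:=\set{g\in L^{s'}(\T):\textstyle\int_\T g=0}.
\end{equation*}
Every smooth $g\in L^{s'}_0$ equals $u'$ for a smooth periodic $u$, namely its antiderivative. Hence
\begin{equation*}
\norm{\phi}_{\dot W^{-1,s}}=\sup\set{\abs{\ip{F,g}}: g\in C^\infty\cap L^{s'}_0,\ \norm{g}_{L^{s'}}\le1}.
\end{equation*}
The functional $g\mapsto\ip{F,g}$ on $L^{s'}_0$ is the restriction of $F\in L^s=(L^{s'})^*$, at least for $s'<\infty$. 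By Hahn--Banach, the norm of this restriction equals the distance from $F$ to the annihilator of $L^{s'}_0$ in $L^s$. That annihilator is exactly the constants, since $g\mapsto\int g$ is the only functional, up to scaling, vanishing on the mean-zero functions. So the dual norm on $L^{s'}_0$ equals $\inf_c\norm{F-c}_{L^s}$. For $1<s\le\infty$, i.e.\ $1\le s'<\infty$, smooth mean-zero functions are dense in $L^{s'}_0$ (convolve with a mollifier, which preserves the mean). Therefore the supremum over smooth $g$ coincides with the supremum over all of $L^{s'}_0$, and this gives equality.

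\textbf{The endpoint $s=1$, $s'=\infty$.} This is the main obstacle. Here $L^1$ is not the dual of $L^\infty$, and smooth functions are not norm-dense in $L^\infty$. Two workarounds seem available.
\begin{itemize}
\item \emph{Direct construction.} Choose $c$ to be a median of $F$ and set $g=\mathrm{sgn}(F-c)$, adjusted on the level set $\set{F=c}$ so that $\int g=0$. This is possible precisely because $c$ is a median. Then $\ip{F,g}=\ip{F-c,g}=\norm{F-c}_{L^1}$. Next mollify $g$ to $g_\eps$: this preserves the mean, keeps $\norm{g_\eps}_\infty\le1$, and gives $g_\eps\to g$ a.e. Dominated convergence yields $\ip{F,g_\eps}\to\norm{F-c}_{L^1}$.
\item \emph{Weak-$*$ density.} Smooth mean-zero functions in the unit ball are weak-$*$ dense in the unit ball of $L^\infty_0$, which suffices because $F\in L^1$.
\end{itemize}
The same median and sign trick also handles $s=\infty$, should one prefer to avoid the abstract duality argument there. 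In that case $s'=1$, and one can test against approximate-identity bumps placed near points where $F-c$ is nearly maximal and nearly minimal.
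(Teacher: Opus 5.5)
Your proof is correct. It rests on the same Hahn--Banach duality between $L^{s'}_\circ$ and $L^s/\R$ as the paper's proof, but you run it in the opposite direction.

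For $1\le s<\infty$, the paper identifies $L^{s'}_\circ$ isometrically with $(L^s/\R)^*$, i.e.\ with the annihilator of the constants in $(L^s)^*=L^{s'}$. It then reads off $\inf_c\norm{F-c}_s$ as a supremum over the unit ball of $L^{s'}_\circ$. This uses only $(L^s)^*=L^{s'}$, which is valid at $s=1$. The version you use (the norm of a restriction equals the distance to the annihilator) appears in the paper only for $s=\infty$. So the duality itself is not what makes $s=1$ special.

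What is genuinely delicate at $s=1$ is the point you isolate and the paper passes over. The seminorm is defined with $u\in C^\infty(\T)$, whereas the paper takes the supremum over all of $\set{u':u\in W^{1,s'}}=L^{s'}_\circ$, and for $s'=\infty$ smooth functions are not norm-dense. Your mollification step justifies exactly this passage, via bounded a.e.\ convergence plus dominated convergence, or equivalently weak-$*$ density of the smooth part of the unit ball. Your median/sign construction additionally gives an explicit near-extremizer. It needs only $\norm{F-c}_1\ge\inf_{c'}\norm{F-c'}_1$, not minimality of the median.

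In short, the paper's version is shorter and treats $1\le s<\infty$ uniformly. Yours is more elementary at the endpoint and fills in the smooth-versus-Sobolev test-function step. Your direct proof of the inequality $\le$ by integration by parts and H\"older is a step the paper absorbs into the isometric identification.
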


Proposition \ref{prop:sobolev norm} is a consequence of classical characterizations of $W^{-1,p}(\Omega)$. See for instance~\cite{SobolevNorm}. However, we give a short proof that exploits the quotient space structure, which we will encounter again in Theorem \ref{thm:GNS} and Remark \ref{rem:GNS - 1} below. See \cite[Lemma 6]{graham2020irregularity} for a related result.

\begin{proof}[Proof of Proposition \ref{prop:sobolev norm}]
    % Let $u$ be a periodic test function, so $\int_\T u' = 0$. Integration by part gives
    % \begin{equation*}
    %     \ip{\phi, u} = -\int_\T Fu' = -\int_\T (F-c)u' \leq \norm{F-c}_s\norm{u'}_{s'}.
    % \end{equation*}
    % Taking the supremum over all $\norm{u'}_{s'}\leq 1$ and taking the infimum over all $c \in \R$, we have
    % \begin{equation*}
    %     \norm{\phi}_{\dot W^{-1,s}} \leq \inf_{c\in \R}\norm{F-c}_s.
    % \end{equation*}

    For the moment, assume $1 \leq s < \infty$. Let $L^{s'}_\circ(\T)$ denote the space of mean-zero functions in $L^{s'}$. We have the identity
    \begin{equation*}
        \set{u' : u \in W^{1,s'}(\T)} = L^{s'}_\circ(\T) \cong (L^s/\R)^*
    \end{equation*}
    where the last isomorphism is also isometric. Therefore,
    \begin{equation*}
        \norm{\phi}_{\dot W^{-1,s}} = \sup_{g \in L^{s'}_\circ,\ \norm{g}_{s'}\leq 1} \int_\T Fg = \inf_{c\in \R}\norm{F-c}_s.
    \end{equation*}
    When $s = \infty$, we use the duality $(L^1_\circ(\T))^* \cong L^\infty(\T)/\R$, and the argument proceeds as before.
\end{proof}

\subsection{Gagliardo-Nirenberg-type estimate}

The heart of the matter is the following Gagliardo-Nirenberg-type estimate. Geometrically speaking, a transition of height $m$ with derivative $L^r$-size $A$ must occupy a horizontal scale at least $(m/A)^{r'}$. Its $L^s$ transport cost is therefore on the order of $m^s\left(\frac mA\right)^{r'}$.

\begin{lemma}\label{lem:GNS}
Let $I=[a,b]$, let $1\leq s<\infty$, let $1<r\leq\infty$, and let $r'$ be the H\"older conjugate of $r$. Let $u\in W^{1,r}(I) \cap C(I)$. Then
\begin{equation*}
    \inf_{c\in \R}\int_I|u-c|^sd x
 \gtrsim_{s,r} \frac{{\abs{u(b)-u(a)}}^{s+r'}}{\norm{u'}_{L^r(I)}^{r'}},
% c_{s,r}=4^{-(s+r')}
\end{equation*}
where we use the convention that $\frac{0}{0} = 0$ on the right-hand side.

\end{lemma}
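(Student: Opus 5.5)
Set $m=\abs{u(b)-u(a)}$ and $A=\norm{u'}_{L^r(I)}$. If $m=0$ the right-hand side is $0$ and there is nothing to prove. If $m>0$ then $A>0$, since $u$ is absolutely continuous and its increment over $I$ is nonzero. By symmetry we may assume $u(a)<u(b)$, and by continuity the infimum over $c$ is attained. Fix any $c\in\R$; the aim is to bound $\int_I\abs{u-c}^s\,dx$ from below by $c_{s,r}\,m^{s+r'}/A^{r'}$ with a constant independent of $c$.

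The first step is to find a level band kept away from $c$. The interval $[u(a),u(b)]$ has length $m$. Split it into four equal quarters, and consider the two quarters $[u(a),u(a)+m/4]$ and $[u(b)-m/4,u(b)]$. At least one of them lies at distance $\ge m/4$ from $c$. Indeed, if $c\le u(a)+m/2$ then every point of the top quarter is at distance $\ge m/4$ from $c$, and otherwise every point of the bottom quarter is. Call the good quarter $J=[\alpha,\alpha+m/4]$.

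The second step is to produce an interval on which $u$ crosses $J$. By the intermediate value theorem there are points $x_0<x_1$ in $I$ with $u(x_0)=\alpha$, $u(x_1)=\alpha+m/4$, and $u(x)\in J$ for all $x\in[x_0,x_1]$. To get them, take $x_1$ to be the first time after the last visit to $\alpha$ that $u$ reaches $\alpha+m/4$, with $x_0$ that last visit. On $[x_0,x_1]$ we then have $\abs{u-c}\ge m/4$. By H\"older,
\[
\frac m4 \le \int_{x_0}^{x_1}\abs{u'}\,dx \le (x_1-x_0)^{1/r'}A,
\]
so $x_1-x_0\ge (m/4A)^{r'}$. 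When $r=\infty$ we have $r'=1$ and the same bound holds.

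Combining the two steps gives
\[
\int_I\abs{u-c}^s\,dx \ge \Big(\frac m4\Big)^s (x_1-x_0) \ge 4^{-(s+r')}\,\frac{m^{s+r'}}{A^{r'}}.
\]
Taking the infimum over $c$ finishes the proof.

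The only real difficulty is choosing the crossing interval so that $u$ stays inside the band $J$ throughout. The last-exit/first-hit construction above handles it.
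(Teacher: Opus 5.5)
Your argument is correct and gives the same constant $4^{-(s+r')}$ as the paper, but it selects the good set differently. The paper obtains the lemma as the case $n=1$ of Theorem~\ref{thm:GNS}. There one picks a point $x$ with $|u(x)-c|\ge m/2$. The Morrey--Sobolev H\"older bound $|u(z)-u(x)|\lesssim A|z-x|^{1-n/r}$ then gives a ball of radius $\rho\approx (m/A)^{1/(1-n/r)}$ around $x$, which is $(m/A)^{r'}$ when $n=1$, on which $|u-c|\ge m/4$. Thickness of the domain bounds the measure of that ball from below. So the paper fixes a neighborhood in the domain and controls how far $u$ can move on it. You instead fix a band of values in the range, at distance at least $m/4$ from $c$, and bound from below how long $u$ needs to traverse it.

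Your route is more elementary. It needs neither the extension property nor Morrey's embedding, and the crossing interval lies inside $I$ by construction, so you need no analogue of the paper's check $\rho\le\diam\Omega$. Applied between a minimum and a maximum point of $u$, it also gives the oscillation form of the bound in dimension one.

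What the paper's route buys is independence from dimension. Your last-exit/first-hit construction depends on the intermediate value theorem and the order structure of $\mathbb{R}$. The pointwise H\"older-plus-thickness argument works unchanged on $n$-dimensional thick extension domains, which is what Theorem~\ref{thm:GNS} needs.
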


% \begin{proof}
% Set $m = \abs{u(b)-u(c)}$.
% If $m = 0$, there is nothing to prove. Assume $m>0$ and fix an arbitrary $c \in \R$. Since
% \[
%  m\leq |u(a)-c|+|u(b)-c|,
% \]
% at least for one endpoint $x_0$, say $x_0 = a$,  we have
% \begin{equation}
%     \abs{u(a)-c}\geq \frac{m}{2}
% \end{equation}
% For $1<r<\infty$, Morrey-Sobolev embedding gives
% \[
%  |u(x)-u(a)|\leq \norm{u'}_{L^r(I)}|x-a|^{1/r'}.
% \]
% Also $m\leq \norm{u'}_{L^r(I)}|I|^{1/r'}$. Hence
% \[
% \xi:=\left(\frac{m}{4\norm{u'}_{L^r(I)}}\right)^{r'}\leq |I|.
% \]
% For $a\leq x\leq a+\xi$,
% \[
%  |u(x)-c|\geq\frac m2-\frac m4=\frac m4.
% \]
% Therefore
% \[
%  \int_I|u-c|^sd x
%  \geq \left(\frac m4\right)^s\ell
%  =4^{-(s+r')}\frac{m^{s+r'}}{\norm{u'}_{L^r(I)}^{r'}}.
% \]
% \end{proof}

\newcommand{\osc}{\mathsf{Osc}}
Lemma \ref{lem:GNS} is an immediate consequence of a more general higher-dimensional counterpart -- Theorem \ref{thm:GNS} -- below. See Remark \ref{rem:GNS - 1} after Theorem \ref{thm:GNS}. 

To fully state Theorem \ref{thm:GNS}, we lay out some standard terminology. Let $n \in \mathbb N_{> 0} $ and let $\R \ni r > n$. Morrey-Sobolev embedding gives $W^{1,r}(\Rn) \hookrightarrow C^{1-\frac{n}{r}}(\Rn)$, so every function in $W^{1,r}(\Rn)$ has a (H\"older) continuous representative. 

Recall that a domain $\Omega\subset \Rn$ is
\LAQcustom{extension domain}{$\Omega$-1}{a $W^{1,r}$-extension domain if for all $u \in W^{1,r}(\Omega)$, there exists $U \in W^{1,r}(\Rn)$ such that $U = u$ a.e. on $\Omega$ and $\norm{U}_{W^{1,r}(\Rn)}\lesssim_{\Omega,n,r} \norm{u}_{W^{1,r}(\Omega)}$;}
\LAQcustom{thick}{$\Omega$-2}{$n$-thick if $\abs{\Omega\cap B(x,\rho)} \gtrsim_{\Omega}\rho^n$ for all $x \in \overline{\Omega}$ and $0 \leq \rho \leq \diam \Omega$, where $\abs{\cdot}$ is the $n$-dimensional Lebesgue measure.}

For a continuous function $u \in C(\Omega)$, we define the oscillation of $u$ to be
\begin{equation*}
    \osc_{\Omega}(u) := \sup_{x,y \in \overline \Omega}\abs{u(x)-u(y)}.
\end{equation*}

\begin{theorem}\label{thm:GNS}
    Let $1\leq s < \infty$, $n < r \leq \infty$, and $\beta = \begin{cases}
        \frac{nr}{r-n} &\text{ if $r < \infty$}
        \\
        n &\text{ if $r = \infty$}
    \end{cases} $.
    Let $\Omega\subset \Rn$ be a domain satisfying \eqref{extension domain} and \eqref{thick}. Then for all $u \in W^{1,r}\cap C(\Omega)$, 
    \begin{equation}
        \inf_{c\in \R}\int_{\Omega} \abs{u(z) - c}^s dz \gtrsim_{\Omega,n,r,s} \frac{\osc_\Omega(u)^{s+\beta}}{\norm{\grad u}_{L^r(\Omega)}^\beta}.
        \label{eq:GNS}
    \end{equation}
    We use the convention $\frac{0}{0} = 0$ on the right-hand side of \eqref{eq:GNS}.
\end{theorem}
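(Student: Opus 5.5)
Fix $c\in\R$ and set $m:=\osc_\Omega(u)$ and $A:=\norm{\grad u}_{L^r(\Omega)}$. We may assume $m>0$. Then $A>0$, since otherwise $u$ is constant and the convention $\frac00=0$ settles the statement. The argument follows the one-dimensional heuristic stated before Lemma \ref{lem:GNS}: a transition of height $m$ forces $u$ to stay a distance $\gtrsim m$ from $c$ on a ball of radius $\rho\approx (m/A)^{\beta/n}$. Thickness \eqref{thick} then gives
\begin{equation*}
\int_\Omega |u-c|^s\gtrsim m^s\rho^n=m^s(m/A)^\beta .
\end{equation*}

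\textbf{Step 1 (choosing a far point).} Pick $x,y\in\overline\Omega$ with $|u(x)-u(y)|\ge m/2$; here $u$ is the continuous representative, extended continuously to $\overline\Omega$ via the extension. Since $|u(x)-c|+|u(y)-c|\ge m/2$, one of them, say $x_0$, satisfies $|u(x_0)-c|\ge m/4$.

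\textbf{Step 2 (local Morrey estimate).} I need a scale-invariant Hölder bound on $\Omega$:
\begin{equation*}
|u(z)-u(x_0)|\lesssim_{\Omega,n,r} A\,|z-x_0|^{1-n/r}\quad\text{for } z\in\Omega,\ |z-x_0|\le\diam\Omega,
\end{equation*}
with the exponent read as $1$ when $r=\infty$. This is the main obstacle. The extension property \eqref{extension domain} controls the full $W^{1,r}$ norm, not just $\norm{\grad u}_{L^r}$. I would first replace $u$ by $u-u_\Omega$, where $u_\Omega$ is the mean of $u$; this changes neither side of \eqref{eq:GNS}, since $c$ is arbitrary and $\osc$ is shift-invariant. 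Next I would apply Poincaré on $\Omega$ to get $\norm{u}_{W^{1,r}(\Omega)}\lesssim A$. Poincaré holds for bounded extension domains by Rellich compactness; I assume $\Omega$ is bounded, which is implicit since otherwise the left side of \eqref{eq:GNS} could be infinite. Extending then gives $U$ with $\norm{\grad U}_{L^r(\Rn)}\lesssim A$, and Morrey's inequality on $\Rn$,
\begin{equation*}
|U(z)-U(w)|\lesssim \norm{\grad U}_{L^r}|z-w|^{1-n/r},
\end{equation*}
yields the claim. For $r=\infty$ the same route works with Lipschitz bounds, treating $r=\infty$ via the $W^{1,\infty}$ extension, or by passing to a large finite $r$ on the bounded domain.

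\textbf{Step 3 (conclusion).} Choose
\begin{equation*}
\rho:=\min\{\,\kappa (m/A)^{1/(1-n/r)},\ \diam\Omega\,\}
\end{equation*}
with $\kappa$ small, so that Step 2 gives $|u(z)-c|\ge m/8$ on $\Omega\cap B(x_0,\rho)$. Note $1/(1-n/r)=\beta/n$. By \eqref{thick}, $\int_\Omega|u-c|^s\gtrsim m^s\rho^n$. If the minimum is $\kappa(m/A)^{\beta/n}$, this equals $m^s(m/A)^\beta$ up to constants. If instead the minimum is $\diam\Omega$, Step 2 applied across the whole domain gives $m\lesssim A(\diam\Omega)^{n/\beta}$. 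Hence $(m/A)^\beta\lesssim(\diam\Omega)^n$, and the bound $m^s(\diam\Omega)^n\gtrsim m^s(m/A)^\beta$ still holds. Taking the infimum over $c$ finishes the proof. Lemma \ref{lem:GNS} then follows with $n=1$: on an interval $\osc\ge|u(b)-u(a)|$, and the right side is increasing in $m$.
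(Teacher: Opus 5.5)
Your proof is correct and follows the paper's route. You pick a point where $|u-c|\gtrsim m$, use the Morrey--H\"older bound with constant $\norm{\grad u}_{L^r(\Omega)}$ to keep $|u-c|\gtrsim m$ on a ball of radius $\approx (m/A)^{\beta/n}$, and conclude by thickness. Your mean-subtraction plus Poincar\'e step is a worthwhile addition: the paper asserts the gradient-only Morrey estimate directly from the extension property, which a priori controls only the full $W^{1,r}$ norm, and your choice of a small $\kappa$ also sidesteps the paper's slip of $m/2$ versus $m/4$ in its choice of $\rho$.
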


\begin{proof}[Proof of Theorem \ref{thm:GNS}]

For convenience, set 
\begin{equation*}
    \text{$A := \norm{\grad u}_{L^r(\Omega)}$ \quad and \quad $\alpha:= 1 - \frac{n}{r}$. }
\end{equation*}
 Thanks to \eqref{extension domain} the Morrey-Sobolev embedding, for all $z,w \in \overline{\Omega}$, 
\begin{equation}
    \abs{u(z)-u(w)} \leq C_{\Omega,n,r} A\cdot \abs{z-w}^{\alpha} \leq C_{\Omega,n,r} A\cdot (\diam \Omega)^\alpha.
    \label{eq:Sobolev embedding}
\end{equation}

Let $x,y \in \overline\Omega$ and $c\in \R$ be arbitrary, and set $m := \abs{u(x) - u(y)}$. We have
\begin{equation*}
    m = \abs{u(x) - u(y)} \leq \abs{u(x) - c} + \abs{u(y) - c}.
\end{equation*}
Without loss of generality, we may assume
\begin{equation}
    \abs{u(x) - c} \geq \frac{m}{2}.
    \label{eq:ux - c}
\end{equation}
Set
\begin{equation*}
    \rho := \brac{\frac{m}{2C_{\Omega,n,r}A}}^{1/\alpha}
    \label{eq:rho}
\end{equation*}
By \eqref{eq:Sobolev embedding}, we see that
\begin{equation*}
    0 \leq \rho \leq \diam \Omega.
\end{equation*}
Therefore, for all $z \in \Omega \cap B(x,\rho)$, we have
\begin{equation}
    \abs{u(z)-u(x)} \leq C_{\Omega,n,r}A\rho^\alpha = \frac{m}{4}.
    \label{eq:uz - ux}
\end{equation}
Then \eqref{eq:ux - c} and \eqref{eq:uz - ux} imply
\begin{equation*}
    \abs{u(z) - c} \geq \abs{u(x) - c} - \abs{u(z) - u(x)} \geq \frac{m}{2}-\frac{m}{4} = \frac{m}{4}
    \text{\quad for all $z \in \Omega \cap B(x,\rho)$.}
\end{equation*}
Thanks to assumption \eqref{thick}, 
\begin{equation}
    \begin{split}
        \int_{\Omega}\abs{u(z)-c}^sdz \geq \int_{\Omega \cap B(x,\rho)}\abs{u(z)-c}^sdz 
        &\geq \brac{\frac{m}{4}}^s\abs{\Omega \cap B(x,\rho)} 
        \\
        &\gtrsim_{\Omega} m^s\rho^n \approx_{\Omega,n,r} m^s\brac{\frac{m}{A}}^{n/\alpha}.
    \end{split}
    \label{eq:integral}
\end{equation}
Since $x,y \in \overline{\Omega}$ and $c \in \R$ are arbitrary, the estimate \eqref{eq:GNS} then follows from \eqref{eq:rho}, \eqref{eq:integral}, and the fact that $n/\alpha = \beta$.
\end{proof}

\begin{remark}\label{rem:GNS - 1}
    To see how Theorem \ref{thm:GNS} implies Lemma \ref{lem:GNS}, we note the following simple observations: 
    \begin{itemize}
        \item $\beta$ is the H\"older conjugate of $r$ when $n = 1$ and $\osc_I(u) \geq \abs{u(b) - u(a)}$;
        \item $I = [a,b]\subset \R$ is a $W^{1,r}$ extension domain for $r > 1$ (see, for instance,~\cite{stein-singular-integral}) and is $1$-thick.
    \end{itemize}
\end{remark}

\begin{remark}
    Recall Proposition \ref{prop:sobolev norm}.
    We note that \eqref{eq:GNS} has a natural quotient-space form that resembles the classical $L^\infty$ Gagliardo-Nirenberg estimate
    \begin{equation*}
        \norm{[u]}_{L^\infty(\Omega)/ \R} \lesssim_{\Omega,n,s,r} \norm{[u]}_{L^s(\Omega)/ \R}^{1-\theta}\norm{\grad u}_{L^r(\Omega)}^\theta, \quad \theta = \frac{\beta}{s+\beta}.
    \end{equation*}
    In the above, $[u] = \set{u+c: c \in \R}$, and we have natural choices of norms
    \begin{equation*}
        \text{$\norm{[u]}_{L^s/\R}:= \inf\limits_{c\in \R}\norm{u - c}_{L^s}$ \quad and \quad  $\norm{[u]}_{L^\infty/\R} := \frac{1}{2}\osc(u)$.}
    \end{equation*}
    For related estimates, see also~\cite{GNSrelated,GNSrelated2}. 
\end{remark}

\subsection{Proofs of main theorems}

\begin{lemma}\label{lem:sum}
Let $1\leq s<\infty$, $1<r<\infty$, and let $r'$ be the H\"older conjugate of $r$. For positive numbers $m_j,A_j$, $1\leq j\leq N$,
\begin{equation}
    \sum_{j=1}^N\frac{m_j^{s+r'}}{A_j^{r'}}
 \geq
 \frac{1}{N^s}
 \frac{\displaystyle\brac{\sum_{j=1}^Nm_j}^{s+r'}}
 {\displaystyle\brac{\sum_{j=1}^NA_j^r}^{r'/r}}.
 \label{eq:aggregate}
\end{equation}
For $r = \infty$, we have
\begin{equation*}
    \sum_{j=1}^N\frac{m_j^{s+r'}}{A_j^{r'}}
 \geq
 \frac{1}{N^s}
 \frac{\displaystyle\brac{\sum_{j=1}^Nm_j}^{s+r'}}
 {\max\limits_{j=1, \cdots, N}A_j}.
\end{equation*}
\end{lemma}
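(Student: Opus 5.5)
The plan is to prove \eqref{eq:aggregate} with two applications of H\"older's inequality: one to control $\sum_j m_j$ by the left-hand side together with the $A_j$, and one to pass from $\sum_j m_j$ to $N^{-s}(\sum_j m_j)^{s+r'}$.

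Write $S := \sum_{j} m_j^{s+r'}A_j^{-r'}$, and set $\gamma := s+r'$ and $\theta := 1/\gamma$. Factor each term as
\begin{equation*}
    m_j = \brac{\frac{m_j^{\gamma}}{A_j^{r'}}}^{1/\gamma} A_j^{r'/\gamma}.
\end{equation*}
Apply H\"older's inequality to $\sum_j m_j$ with three exponents: $\gamma$ on the first factor, $\gamma r/r'$ on the factor $A_j^{r'/\gamma}$ (so that raising it to that power gives $A_j^r$), and a third exponent $t$ on the constant sequence $1$, which produces a power of $N$. The exponents must satisfy
\begin{equation*}
    \frac{1}{\gamma}+\frac{r'}{\gamma r}+\frac1t = 1 .
\end{equation*}
Since $r'/r = r'-1$, this reads $\frac{1}{t} = 1 - \frac{r'}{s+r'} = \frac{s}{s+r'}$, which is admissible. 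The result is
\begin{equation*}
    \sum_j m_j \le S^{1/\gamma}\brac{\sum_j A_j^r}^{r'/(\gamma r)} N^{s/\gamma}.
\end{equation*}
Raising to the power $\gamma$ gives
\begin{equation*}
    \brac{\sum_j m_j}^{s+r'} \le S\brac{\sum_j A_j^r}^{r'/r}N^s,
\end{equation*}
which is exactly \eqref{eq:aggregate}.

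For $r=\infty$ we have $r'=1$, and I would argue directly, reading the denominator as $\max_j A_j$ raised to $r'=1$. Set $A := \max_j A_j$. Then $S \ge A^{-1}\sum_j m_j^{s+1}$. By power-mean (Jensen) convexity of $x\mapsto x^{s+1}$,
\begin{equation*}
    \sum_j m_j^{s+1} \ge N^{-s}\brac{\sum_j m_j}^{s+1}.
\end{equation*}
Combining the two bounds gives the claim.

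The only delicate point is the exponent bookkeeping in the three-factor H\"older step, namely checking that $r'/r = r'-1$ makes the exponents sum to one. Edge cases such as $s=1$ need no separate treatment, since the third exponent $t = (s+r')/s$ is finite and at least $1$ for all $s\ge 1$.
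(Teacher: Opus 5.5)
Your proof is correct and is essentially the paper's argument. The paper applies H\"older with exponents $(r',r)$ to $m_j^{\delta}$, $\delta = 1+s/r'$, and then uses the power-mean inequality $\sum_j m_j \le N^{1-1/\delta}\bigl(\sum_j m_j^{\delta}\bigr)^{1/\delta}$; since $\delta r' = s+r' = \gamma$, composing these two steps gives exactly your three-factor H\"older inequality. Your direct treatment of $r=\infty$ via $A_j \le \max_j A_j$ and convexity of $x\mapsto x^{s+1}$ is also correct, and it fills in the case the paper only calls standard.
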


\begin{proof}
We prove the case $r < \infty$. The modification $r = \infty$ is standard. Set
\[
 \delta=1+s\left(1-\frac1r\right) = 1 + \frac{s}{r'}
 % \frac{(r-1)(s+r')}{r} 
 \in (1, 1+s).
\]
We have
\begin{equation*}
    m_j^\delta=
 \left(\frac{m_j^{s+r'}}{A_j^{r'}}\right)^{1/r'}
 (A_j^r)^{1/r}.
\end{equation*}
By H\"older's inequality,
\begin{equation}
    \sum_jm_j^\delta
 \leq
 \left(\sum_j\frac{m_j^{s+r'}}{A_j^{r'}}\right)^{1/r'}
 \left(\sum_jA_j^r\right)^{1/r}.
 \label{eq:holder}
\end{equation}
Using the convexity of the function $t \mapsto t^{\delta}$, we have
\begin{equation}
    \frac{1}{N}\sum_{j=1}^N m_j^\delta \geq \brac{\frac{1}{N}\sum_{j=1}^Nm_j}^\delta.
    \label{eq:jensen}
\end{equation}
Rearranging \eqref{eq:holder} and using \eqref{eq:jensen}, 
we have
\begin{equation*}
    \sum_j\frac{m_j^{s+r'}}{A_j^{r'}}
 \geq
 \frac{(\sum_jm_j^\delta)^{r'}}{(\sum_jA_j^r)^{r'/r}}
 \geq
 \frac{N^{(1-\delta)r'}(\sum_jm_j)^{\delta r'}}
 {(\sum_jA_j^r)^{r'/r}}.
\end{equation*}
Since $(\delta-1)r'=s$ and $\delta r'=s+r'$, we see that \eqref{eq:aggregate} holds.
\end{proof}

We start with the critical-exponent estimate \eqref{eq:beta critical intro} from the introduction. 
\begin{theorem}\label{thm:critical L1Lr}
Let $1\leq s<\infty$, $1<r\leq\infty$, and let $r'$ be the H\"older conjugate of $r$. Define
\begin{equation}
    \beta_c = \beta_c(s,r)=1+\frac{r'}s.
 \label{eq:beta critical}
\end{equation}
Then
\[
 \nsi\norm{\phi}_{\dot W^{-1,s}}
 \gtrsim_{s,r}
 \frac{\norm{\phi}_1^{\beta_c}}
 {\norm{\phi}_r^{\beta_c-1}}.
\]
\end{theorem}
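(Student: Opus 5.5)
Let $F$ be a primitive of $\phi$. Since $\phi$ has zero mean, $F$ is periodic, $F\in C^1(\T)$, and $F'=\phi\in L^r$. By Proposition \ref{prop:sobolev norm},
\[
\norm{\phi}_{\dot W^{-1,s}}^s=\inf_{c\in\R}\int_\T|F-c|^s\,dx .
\]

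\textbf{Step 1: decompose by sign.} Let $N=\nsi$; since $\phi\neq0$ has mean zero, $N\geq2$ and $N$ is even. Choose points $x_1<\dots<x_N$ (cyclically) at the sign changes. They cut $\T$ into $N$ arcs $I_j=[x_j,x_{j+1}]$, and on each arc $\phi$ has a single sign. Hence $F$ is monotone on $I_j$, and
\[
m_j:=|F(x_{j+1})-F(x_j)|=\int_{I_j}|\phi|.
\]
Consequently $\sum_j m_j=\norm{\phi}_1$. Put $A_j:=\norm{\phi}_{L^r(I_j)}$, so that $\sum_jA_j^r=\norm{\phi}_r^r$ when $r<\infty$ and $\max_jA_j\leq\norm{\phi}_\infty$. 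Arcs with $m_j=0$ contribute nothing and can be discarded.

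\textbf{Step 2: local lower bound and summation.} Fix any $c$. Lemma \ref{lem:GNS} applied on each $I_j$ gives
\[
\int_{I_j}|F-c|^s \gtrsim_{s,r} \frac{m_j^{s+r'}}{A_j^{r'}} .
\]
The constant can be taken uniform in the interval, because the proof of Theorem \ref{thm:GNS} in one dimension only uses $|u(z)-u(x)|\leq A|z-x|^{1/r'}$, which holds on any interval by H\"older. Summing over $j$ and applying Lemma \ref{lem:sum} yields
\[
\int_\T|F-c|^s\gtrsim N^{-s}\frac{\norm{\phi}_1^{s+r'}}{\norm{\phi}_r^{r'}}.
\]
This also covers $r=\infty$, where $r'=1$. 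The bound is uniform in $c$, so taking the infimum and then the $s$-th root gives
\[
N\norm{\phi}_{\dot W^{-1,s}}\gtrsim \frac{\norm{\phi}_1^{1+r'/s}}{\norm{\phi}_r^{r'/s}} .
\]
Since $\beta_c-1=r'/s$, this is exactly the claim.

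\textbf{Main obstacle.} The only delicate points are the constant uniformity in Lemma \ref{lem:GNS} across intervals of varying length and the handling of degenerate arcs. Uniformity I would verify directly from the Morrey/H\"older inequality, as indicated above. For degenerate arcs, sign changes are counted so that each arc carries one sign, and zero plateaus do not matter since they only merge into neighbouring arcs. Lemma \ref{lem:sum} needs positive $A_j$; on arcs with $m_j>0$ we automatically have $A_j>0$, and discarding the remaining arcs only decreases $N$, which helps.
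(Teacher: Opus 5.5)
Your proposal is correct and follows essentially the same route as the paper: split $\T$ into sign intervals, apply Lemma~\ref{lem:GNS} to the primitive $F$ on each interval, aggregate with Lemma~\ref{lem:sum}, and finish with Proposition~\ref{prop:sobolev norm}. Your extra remark that the constant in Lemma~\ref{lem:GNS} is uniform over intervals of arbitrary length (via $|u(z)-u(x)|\leq \norm{u'}_{L^r}|z-x|^{1/r'}$) is a point the paper leaves implicit, and it is needed for the summation step to be legitimate.
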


\begin{proof}
Without loss of generality, we assume $\phi$ is continuous. Let $I_j=(a_j,b_j)$ be the sign intervals of $\phi$ such that $\phi$ is not identically zero. Set
\begin{equation*}
     m_j=\norm{\phi}_{L^1(I_j)}
 \qaq
 A_j=\norm{\phi}_{L^r(I_j)}.
\end{equation*}
For the moment, we assume $r < \infty$. We have
\begin{equation}
     \sum_jm_j=\norm\phi_1
     \qaq
 \sum_jA_j^r=\norm\phi_r^r.
 \label{eq:mj and Aj}
\end{equation}
Let $F$ be any periodic primitive of $\phi$. Because $F'=\phi$ has constant sign on $I_j$,
\[
 \abs{F(b_j)-F(a_j)}=m_j.
\]
For an arbitrary constant $c$, Lemma~\ref{lem:GNS} gives
\begin{equation}
     \norm{F-c}_s^s
 =\sum_j\int_{I_j}|F-c|^s
 \gtrsim_{s,r}
 \sum_j\frac{m_j^{s+r'}}{A_j^{r'}}.
 \label{eq:F-c}
\end{equation}
Using Lemma~\ref{lem:sum} and \eqref{eq:mj and Aj} to estimate \eqref{eq:F-c}, we have
\[
 \norm{F-c}_s^s
 \gtrsim_{s,r}
 \frac{\norm\phi_1^{s+r'}}{\nsi^s\norm\phi_r^{r'}}.
\]
Taking the infimum over all $c \in \R$ and using the convient formula for $\dot W^{-1,s}$ from Proposition \ref{prop:sobolev norm}, we see that Theorem \ref{thm:critical L1Lr} holds for $r < \infty$.

For $r = \infty$, we replace $\sum_j A_j^r$ by $\max_j A_j$, and the same $L^\infty$ part of Lemma~\ref{lem:sum} applies. 

This concludes the proof of the theorem. 
\end{proof}

\begin{remark}\label{rem:sharp critical cases}
    We consider some special cases.
    \begin{itemize}
        \item For $s=1$ and $r=\infty$, it gives
        \begin{equation*}
            \nsi\norm\phi_{\dot W^{-1,1}}
 \gtrsim\frac{\norm\phi_1^2}{\norm\phi_\infty}.
        \end{equation*}
        By the Kantorovich-Rubinstein duality, this gives the same transport inequality given obtained in~\cite[proof of Theorem 4]{steinerberger2021wasserstein}, assuming solutions to $\psi = \psi_\T := \int_\T \psi$ are nondegenerate:
        \begin{equation*}
            \#\set{\psi = \psi_\T} \cdot \mathcal W_1(\psi(x)dx, \psi_\T dx) \gtrsim \frac{\norm{\psi - \psi_\T}_1^2}{\norm{\psi}_\infty}.
        \end{equation*}
        \item For $s=2$ and $r=\infty$, Theorem~\ref{thm:critical L1Lr} gives
        \begin{equation*}
            \nsi\norm\phi_{\dot H^{-1}}
 \gtrsim\frac{\norm\phi_1^{3/2}}{\norm\phi_\infty^{1/2}} = \sqrt{r_\phi}\frac{\norm \phi_1^{2}}{\norm \phi_\infty}, \quad r_\phi = \frac{\norm \phi_\infty}{\norm \phi_1} \geq 1.
        \end{equation*}
        % As mentioned in the introduction, this is stronger than the estimate \eqref{eq:Steinerberger} obtained in~\cite{steinerberger2021wasserstein}.
\item As $s\to\infty$, the critical exponent formally tends to one, recovering the following heuristic estimate, analogous version of which is also considered in~\cite{steinerberger2021wasserstein}:
\begin{equation*}
    \inf_{c \in \R}\norm{F-c}_\infty\gtrsim\frac{\norm\phi_1}{\nsi}.
\end{equation*}
    \end{itemize}

\end{remark}

\begin{theorem}\label{thm:beta interpolate}
Fix $1\leq s<\infty$, $\alpha,\beta\geq1$, and $1\leq p,q, r\leq\infty$. The estimate
\begin{equation}
     (\nsi)^\alpha\norm{\phi}_{\dot W^{-1,s}}
 \gtrsim_{p,r,s,\alpha}
 \frac{\norm{\phi}_p^\beta}{\norm{\phi}_r^{\beta-1}}
 \label{eq:one num}
\end{equation}
holds uniformly if and only if
\begin{equation} 
\text{
$\alpha \geq 1$, $p < r$, and $\beta\left(\frac{1}{p}-\frac{1}{r}\right)
 \geq1+\frac{1}{s}-\frac{1}{r}$.
}
 \label{eq:one num param}
\end{equation}
\end{theorem}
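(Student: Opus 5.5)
We prove sufficiency by reducing to Theorem~\ref{thm:critical L1Lr}, and necessity by testing on explicit families.

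\emph{Sufficiency.} Since $\nsi\geq 1$ for a nonzero mean-zero continuous $\phi$, it suffices to take $\alpha=1$. The case $p<r$ forces $r>1$, so Theorem~\ref{thm:critical L1Lr} applies with the pair $(s,r)$ and gives
\[
\nsi\norm{\phi}_{\dot W^{-1,s}}\gtrsim \norm{\phi}_1^{\beta_c}\norm{\phi}_r^{1-\beta_c},\qquad \beta_c=1+r'/s .
\]
We then interpolate $\norm{\phi}_p$ between $L^1$ and $L^r$ by H\"older: $\norm{\phi}_p\leq \norm{\phi}_1^{\theta}\norm{\phi}_r^{1-\theta}$, where $\frac1p=\theta+\frac{1-\theta}{r}$, so that $\theta=\frac{1/p-1/r}{1-1/r}$. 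This yields
\[
\frac{\norm{\phi}_p^\beta}{\norm{\phi}_r^{\beta-1}}\leq \norm{\phi}_1^{\theta\beta}\norm{\phi}_r^{1-\theta\beta}.
\]
Because $\norm{\phi}_1\leq\norm{\phi}_r$ on the normalized torus, this is bounded by $\norm{\phi}_1^{\beta_c}\norm{\phi}_r^{1-\beta_c}$ exactly when $\theta\beta\geq\beta_c$. Unwinding, the condition $\theta\beta\geq 1+r'/s$ is equivalent to
\[
\beta(1/p-1/r)\geq (1-1/r)(1+r'/s)=1-1/r+1/s,
\]
which is \eqref{eq:one num param}. (For $r=\infty$ the same computation holds with $r'=1$.)

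\emph{Necessity.} We would use three families, and I expect constructing them to be the main obstacle.
\begin{itemize}
\item[(i)] To force $\alpha\geq1$, take $\phi_k(x)=\sin(2\pi kx)$. Here $\nsi=2k$, $\norm{\phi_k}_{\dot W^{-1,s}}\approx 1/k$, and all $L^p$ norms are $\approx1$. The left side is then $\approx k^{\alpha-1}$, which must stay bounded below as $k\to\infty$, so $\alpha\geq1$.
\item[(ii)] To force $p<r$, note that $\nsi$ can be kept equal to $2$ while the negative Sobolev norm is small. Take a positive bump of height $1$ and width $\eps$ at $0$ and a matching negative bump at distance $\delta$. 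The primitive $F$ is then supported on an interval of length $\approx\delta$ with height $\approx\eps$, so $\norm{\phi}_{\dot W^{-1,s}}\approx \eps\delta^{1/s}$, while $\norm{\phi}_p\approx\eps^{1/p}$. The left side behaves like $\eps\,\delta^{1/s}$ and the right side like $\eps^{\beta/p-(\beta-1)/r}$. If $p\geq r$, the right-side exponent is at most $1/p\leq1$, and sending $\delta\to0$ with $\eps$ fixed gives a contradiction.
\item[(iii)] To force the main condition, take $\delta\approx\eps$. The left side is $\approx\eps^{1+1/s}$ and the right side is $\eps^{\beta(1/p-1/r)+1/r}$. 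Letting $\eps\to0$ requires $\beta(1/p-1/r)+1/r\geq 1+1/s$, which is exactly \eqref{eq:one num param}.
\end{itemize}

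The delicate part is checking that a single two-bump family (ii)/(iii) with adjustable $\delta$ gives the right scaling uniformly in all parameters, including the endpoints $p=1$ and $r=\infty$. In particular, for (ii) one must verify the $L^s$ computation of $\inf_c\norm{F-c}_s$ via Proposition~\ref{prop:sobolev norm}.
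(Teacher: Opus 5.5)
Your sufficiency argument and tests (i) and (iii) coincide with the paper's proof. In both, you interpolate $\norm{\phi}_p$ between $L^1$ and $L^r$ and use $\norm{\phi}_1\le\norm{\phi}_r$ and $\nsi^\alpha\ge\nsi$ to reduce to Theorem~\ref{thm:critical L1Lr}. For necessity, both test $\sin(2\pi kx)$ and an $\eps$-rescaled fixed mean-zero profile; your two bumps at distance $\delta\approx\eps$ are such a profile.

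Test (ii), however, fails as written. The asymptotics $\inf_c\norm{F-c}_s\approx\eps\delta^{1/s}$ and $\norm{\phi}_p\approx\eps^{1/p}$ assume the two bumps are disjoint, i.e.\ $\delta\gtrsim\eps$. So you cannot send $\delta\to0$ with $\eps$ fixed. Once $\delta\ll\eps$, write $f$ for the bump. Then $\phi=f-f(\cdot-\delta)\approx\delta f'$ and $F(x)=\int_{x-\delta}^{x}f\approx\delta f(x)$. Hence $\norm{\phi}_{\dot W^{-1,s}}$, $\norm{\phi}_p$ and $\norm{\phi}_r$ all scale linearly in $\delta$. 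Both sides of \eqref{eq:one num} are $1$-homogeneous, so this regime yields no contradiction.

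The step is also unnecessary. The inequality $\beta(\frac1p-\frac1r)+\frac1r\ge1+\frac1s$ obtained in (iii) cannot hold when $p\ge r$: its left side is then at most $\frac1r\le1<1+\frac1s$. This is how the paper obtains $p<r$, from the single concentrating family. If you drop (ii) and note this implication, your proof is complete.
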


\begin{proof}
We begin with sufficiency. Let
\[
 \theta=\frac{\frac{1}{p}-\frac{1}{r}}{1-\frac{1}{r}}.
\]
Riesz-Thorin interpolation yields
\begin{equation*}  \norm{\phi}_p\leq\norm{\phi}_1^\theta\norm{\phi}_r^{1-\theta}
 \quad \implies \quad 
 \frac{\norm\phi_p^\beta}{\norm\phi_r^{\beta-1}}
 \leq
 \frac{\norm{\phi}_1^{\beta\theta}}{\norm{\phi}_r^{\beta\theta-1}}.
\end{equation*}
The last part of condition \eqref{eq:one num param} is precisely $\beta\theta\geq\beta_c(s,r)$ from \eqref{eq:beta critical}. 
Since $\T$ is normalized, we have $\norm{\phi}_1 \leq \norm{\phi}_r$. Since $\alpha \geq 1$, we have $\nsi^\alpha \geq \nsi$.  
The estimate \eqref{eq:one num} then follows from Theorem~\ref{thm:critical L1Lr}.

For necessity, set $\phi_n(x)=\sin(2\pi nx)$, and observe that
\begin{equation}
    N_{\phi_n} \approx n,
 \qquad
 \norm{\phi_n}_{\dot W^{-1,s}} \approx n^{-1},
 \qquad
 \norm{\phi_n}_t\approx_t 1.
 \label{eq:test - high frequency}
\end{equation}
Substituting \eqref{eq:test - high frequency} into \eqref{eq:one num} shows that $\alpha\geq1$ is necessary.

Next, choose a fixed continuous mean-zero function $\psi$ compactly supported in $(0,1)$ with a bounded number of sign intervals and set
\begin{equation*}
    \phi_\eps(x)=\psi\left(\frac{x-1/2}{\eps}\right).
\end{equation*}
We then have
\begin{equation}
    N_{\phi_\eps}\approx1
    \quad,\quad
    \norm{\phi_\varepsilon}_p\approx\varepsilon^{1/p}
 \text{\quad, and \quad}
 \norm{\phi_\varepsilon}_{\dot W^{-1,s}}
 \approx\varepsilon^{1+1/s}.
 \label{eq:test - concentrate}
\end{equation}
Substitution \eqref{eq:test - concentrate} into \eqref{eq:one num} forces
\begin{equation*}
    \frac{\beta}{ r}-\frac{\beta-1}{r}\geq1+\frac{1}{s}.
\end{equation*}
Therefore, \eqref{eq:one num param} must hold.
\end{proof}

\begin{proof}[Proof of Theorem \ref{thm:main}]
For sufficiency, we note that $1\leq p,q \leq r$ by \eqref{eq:two num param}, so we may interpolate $L^p$ and $L^q$ separately between $L^1$ and $L^r$. Put
\[
 \theta_p=\frac{\frac{1}{p}-\frac{1}{r}}{1-\frac{1}{r}}
 \qaq
 \theta_q=\frac{\frac{1}{q}-\frac{1}{r}}{1-\frac{1}{r}}.
\]
Also set $\gamma=\theta_p+\theta_q$. Then
\[
 \frac{\norm{\phi}_p\norm{\phi}_q}{\norm\phi_r}
 \leq
 \frac{\norm\phi_1^\gamma}{\norm\phi_r^{\gamma-1}}.
\]
Condition \eqref{eq:two num param} is equivalent to $\gamma\geq\beta_c(s,r)$ from \eqref{eq:beta critical}, so Theorem~\ref{thm:critical L1Lr} applies. Necessity follows from the same two tests in the proof of Theorem \ref{thm:beta interpolate}, so we omit the details.
\end{proof}

\section{Application and further discussions}
\label{sect:additional}

\newcommand{\Rphi}{\mathcal R(\phi)}
Recall Theorem \ref{thm:main} and Theorem \ref{thm:critical L1Lr}.
For the rest of this section, we assume $2 \leq s < \infty$ and define
\begin{equation}
    \Rphi := \begin{cases}
        \frac{\norm{\phi}_p\norm{\phi}_q}{\norm{\phi}_r}
        &\text{ where } \frac{1}{p} + \frac{1}{q} - \frac{1}{r} \geq 1 + \frac{1}{s}
        \\[1em]
        \text{ or }
        \\[1em]
        \mathcal R(\phi) = \frac{\norm\phi_{1}^{1 + r'/s}}{\norm\phi_r^{r'/s}}
        &\text{ where } r \in (1,\infty] 
    \end{cases} \ .
    \label{eq:Rphi}
\end{equation}

\newcommand{\FL}{\mathcal FL}

\subsection{$L^p$ bounds of low frequency Fourier projection and uncertainty principle}

Recall that the $L^p$ Fourier-Lebesgue seminorm of order $m\in \R$ of a zero-mean function $\phi$ is given by
\begin{equation*}
    \norm{\phi}_{\FL^{p}_m(\T)}:= \brac{\sum_{k\in \mathbb Z\setminus 0}\abs{k}^{mp}
    \abs{\hat\phi(k)}^{p} 
    }^{1/p}.
\end{equation*}
In particular, $\norm{\phi}_{\FL^{p}_{-1}} = \brac{\sum_{k\neq 0}\frac{\abs{\hat\phi(k)}^{p}}{\abs{k}^{p}}}^{1/p}$. Let $\mathcal P_n:L^2(\T) \to L^2(\T)$ denote the projection onto the first $n$ Fourier modes, so 
\begin{equation*}
    \mathcal P_n\phi(x):= \sum_{0 < \abs{k}\leq n}\hat{\phi}(k)e^{2\pi i kx}
    \text{\quad for a mean zero $\phi \in L^2(\T)$.}
\end{equation*}

\begin{theorem}\label{thm:low frequency}
Let $2\leq s < \infty$.
There exists a constant $c_{p,q,r,s}$ such that for all $n \geq 1$, $m \geq 0$, and nonzero $\phi \in \FL^{s'}_m\cap C(\T)$ with zero mean, we have
\begin{equation}
    \norm{\mathcal P_n\phi}_{\FL^{s'}_{-1}(\T)} \geq c_{p,q,r,s}\frac{\mathcal R(\phi)}{\nsi} - n^{-(m+1)}\norm{\phi}_{\FL^{s'}_{m}(\T)}
    \label{eq:low frequency}
\end{equation}
with $\Rphi$ as in \eqref{eq:Rphi}.
\end{theorem}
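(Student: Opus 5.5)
The plan is to combine the lower bound $\nsi\norm{\phi}_{\dot W^{-1,s}}\gtrsim \Rphi$ (from Theorem \ref{thm:main} with $\alpha=1$ in the first case of \eqref{eq:Rphi}, or from Theorem \ref{thm:critical L1Lr} in the second) with a Hausdorff--Young-type upper bound for the $\dot W^{-1,s}$ norm in terms of the Fourier--Lebesgue seminorm $\FL^{s'}_{-1}$. After that, the high-frequency tail is absorbed by a trivial weight comparison.

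First, I would show $\norm{\phi}_{\dot W^{-1,s}}\lesssim_s \norm{\phi}_{\FL^{s'}_{-1}}$ for $2\le s<\infty$. By Proposition \ref{prop:sobolev norm}, $\norm{\phi}_{\dot W^{-1,s}}=\inf_c\norm{F-c}_s\le\norm{F_0}_s$, where $F_0$ is the mean-zero primitive, with $\hat F_0(k)=\hat\phi(k)/(2\pi i k)$ for $k\neq0$. Since $s\ge2$, the Hausdorff--Young inequality on $\T$ (the dual direction $\norm{f}_{L^s}\le\norm{\hat f}_{\ell^{s'}}$) gives
\begin{equation*}
\norm{F_0}_{L^s}\le \Big(\sum_{k\ne0}\abs{\hat\phi(k)}^{s'}\abs{2\pi k}^{-s'}\Big)^{1/s'}\lesssim \norm{\phi}_{\FL^{s'}_{-1}}.
\end{equation*}
This is exactly where the restriction $s\ge2$ is used.

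Second, I would split $\phi=\mathcal P_n\phi+(\phi-\mathcal P_n\phi)$. Because $\FL^{s'}_{-1}$ is a weighted $\ell^{s'}$ norm on disjoint frequency sets, the triangle inequality gives $\norm{\phi}_{\FL^{s'}_{-1}}\le\norm{\mathcal P_n\phi}_{\FL^{s'}_{-1}}+\norm{\phi-\mathcal P_n\phi}_{\FL^{s'}_{-1}}$. For $\abs{k}>n$ and $m\ge0$ we have $\abs{k}^{-1}=\abs{k}^{m}\abs{k}^{-(m+1)}\le n^{-(m+1)}\abs{k}^m$, so the tail is at most $n^{-(m+1)}\norm{\phi}_{\FL^{s'}_m}$. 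This quantity is finite by assumption, and it also guarantees that the Fourier series manipulations above make sense.

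Finally, I would chain the estimates:
\begin{equation*}
\frac{\Rphi}{\nsi}\lesssim\norm{\phi}_{\dot W^{-1,s}}\le C_s\norm{\phi}_{\FL^{s'}_{-1}}\le C_s\big(\norm{\mathcal P_n\phi}_{\FL^{s'}_{-1}}+n^{-(m+1)}\norm{\phi}_{\FL^{s'}_m}\big).
\end{equation*}
If $\nsi=\infty$ the claim is trivial, since the right side of \eqref{eq:low frequency} is then nonpositive; $\nsi\ge1$ holds automatically for a nonzero mean-zero continuous function. Rearranging the chain yields \eqref{eq:low frequency} with $c=1/(C\,C_s)$, up to the harmless constant $C_s$ in front of the tail, which can be normalized away using the same Hausdorff--Young constant. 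The main point to check is the Hausdorff--Young step: the direction of the inequality and the range $s\ge2$ must match, and everything else is bookkeeping.
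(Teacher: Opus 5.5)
Your proposal is correct and follows essentially the same route as the paper: bound $\norm{\phi}_{\dot W^{-1,s}}\le\norm{F_0}_s\le\frac{1}{2\pi}\norm{\phi}_{\FL^{s'}_{-1}}$ via Proposition~\ref{prop:sobolev norm} and the dual Hausdorff--Young inequality for $s'\in(1,2]$, combine this with Theorem~\ref{thm:main} (or Theorem~\ref{thm:critical L1Lr}), and control the tail using $\abs{k}^{-1}\le n^{-(m+1)}\abs{k}^m$ for $\abs{k}>n$. One cosmetic point: dividing your chain by $C_s$ already gives coefficient exactly $1$ in front of $n^{-(m+1)}\norm{\phi}_{\FL^{s'}_m}$, so the tail constant needs no further normalization.
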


\begin{proof}
    We prove \eqref{eq:low frequency} for $\Rphi = \frac{\norm{\phi}_p\norm{\phi}_q}{\norm{\phi}_r}$. The other bound is similar using Theorem \ref{thm:critical L1Lr}.

    Let $F$ be a mean-zero periodic primitive of $\phi$, so $\hat F(k) = \frac{\hat \phi(k)}{2\pi i k}$ for $k \neq 0$.
    Since $2 \leq s < \infty$, we have $s' \in (1,2]$. By the (reverse) Hausdorff-Young inequality and \eqref{eq:Sobolev norm}, 
    \begin{equation*}
        \norm{\phi}_{\dot W^{-1,s}} = \inf_{c\in \R}\norm{F-c}_s \leq \norm{F}_s \leq \brac{\sum_{k\neq 0}\abs{\hat F(k)}^{s'}}^{1/s'} = \frac{1}{2\pi}\brac{\sum_{k\neq 0}
        \frac{\abs{\hat \phi(k)}^{s'}}{\abs{k}^{s'}}
        }^{1/s'}.
    \end{equation*}
    Combining this with Theorem \ref{thm:main} yields
    \begin{equation}
        \norm{\phi}_{\FL^{s'}_{-1}} = \brac{\sum_{k\neq 0}
        \frac{\abs{\hat \phi(k)}^{s'}}{\abs{k}^{s'}}
        }^{1/s'} \gtrsim_{p,q,r,s}\frac{\mathcal A_{p,q,r}(\phi)}{\nsi}.
        \label{eq:low freq - 1}
    \end{equation}

    By the triangle inequality,
    \begin{equation}
        \begin{split}
            \norm{\phi}_{\FL^{s'}_{-1}} 
            &\leq \norm{\mathcal P_n\phi}_{\FL^{s'}_{-1}} + 
            \brac{\sum_{\abs{k} > n}
            \frac{\abs{\hat \phi(k)}^{s'}}{\abs{k}^{s'}}
            }^{1/s'}
            \\
            &
            \leq 
            \norm{\mathcal P_n\phi}_{\FL^{s'}_{-1}} +
            \brac{
            \sum_{\abs{k} > n}
            \frac{
            \abs{k}^{ms'}\abs{\hat\phi(k)}^{s'}
            }{
            \abs{k}^{(m+1)s'}
            }
            }^{1/s'}
            \\
            &\leq
            \norm{\mathcal P_n\phi}_{\FL^{s'}_{-1}} + 
            \brac{
            n^{-(m+1)s'}\norm{\phi}_{\FL^{s'}_m}^{s'}
            }^{1/s'} 
            = \norm{\mathcal P_n\phi}_{\FL^{s'}_{-1}} + n^{-(m+1)}\norm{\phi}_{\FL^{s'}_m}.
        \end{split} 
        \label{eq:low freq - 2}
    \end{equation}
    Estimate \eqref{eq:low frequency} then follows from \eqref{eq:low freq - 1} and \eqref{eq:low freq - 2}.
    
\end{proof}

% \begin{theorem}
%     Let $s \geq 2$. For all mean-zero $\phi \in C^\infty(\T)$, we have
%     \begin{equation}
%         \norm{\mathcal P_n\phi}_{L^s(\T)} \gtrsim_{s} \frac{1}{N_\phi} \frac{\norm{\phi}_{L^1(\T)}^{1+1/s}}{\norm{\phi}_{L^\infty(\T)}^{1/s}}
%         \text{\quad for sufficiently large $n$.}
%     \end{equation}
%     In particular, for $s = 2$, we have
%     \begin{equation*}
%         \norm{\mathcal P_n\phi}_{L^2(\T)} \gtrsim \frac{1}{N_\phi} \frac{\norm{\phi}_{L^1(\T)}^{3/2}}{\norm{\phi}_{L^\infty(\T)}^{1/2}}
%         \text{\quad for sufficiently large $n$.}
%     \end{equation*}
% \end{theorem}

A closely related result to Theorem \ref{thm:low frequency} is the following uncertainty principle that resembles \eqref{eq:Steinerberger}. Heuristically speaking, if $\phi$ has few sign changes, then it must have large $\dot W^{-1,s}$ discrepancy. 

\begin{theorem}\label{thm:uncertainty}
    Let $2\leq s < \infty$.
    For all $\phi \in C(\T)$ with zero mean, $M\geq \norm{\phi_-}_{L^\infty(\T)}$, and $n \in \mathbb N_{>0}$,
    \begin{equation}
        \nsi \left[
        \frac{M}{n} + \brac{\sum_{k=1}^{n-1}
        \frac{\abs{\hat\phi(k)}^{s'}}{k^{s'}}
        }^{1/s'}
        \right]
        \gtrsim_{p,q,r,s}
        \Rphi
        \label{eq:ET main}
    \end{equation}
    with $\Rphi$ as in \eqref{eq:Rphi}.
\end{theorem}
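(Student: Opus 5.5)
The plan is to run an Erdős–Turán-type smoothing argument on the periodic primitive of $\phi$, and then feed the result into Theorem~\ref{thm:main} (or Theorem~\ref{thm:critical L1Lr} for the second choice of $\Rphi$) through Proposition~\ref{prop:sobolev norm}. Let $F$ be the mean-zero periodic primitive of $\phi$. By Proposition~\ref{prop:sobolev norm} and Theorem~\ref{thm:main},
\begin{equation*}
\frac{\Rphi}{\nsi}\lesssim \norm{\phi}_{\dot W^{-1,s}}=\inf_{c}\norm{F-c}_s\leq \norm{F}_s .
\end{equation*}
So it suffices to show
\begin{equation*}
\norm{F}_s\lesssim_s \frac{M}{n}+\Big(\sum_{k=1}^{n-1}\frac{\abs{\hat\phi(k)}^{s'}}{k^{s'}}\Big)^{1/s'}.
\end{equation*}

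To do this, fix a nonnegative, even trigonometric polynomial kernel $K_n$ with $\hat K_n(k)=0$ for $\abs{k}\geq n$, $\int_\T K_n=1$, $\abs{\hat K_n}\leq 1$, and
\begin{equation*}
\int_{-1/2}^{1/2}\abs{y}K_n(y)\,dy\lesssim \frac1n .
\end{equation*}
A Jackson kernel (a normalized square of a Fejér kernel) has all of these properties. The plain Fejér kernel does not: its first moment is of order $\frac{\log n}{n}$. Split $F=F*K_n+(F-F*K_n)$ and treat the two pieces separately.

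\emph{Low-frequency piece.} Since $s\geq2$, the Hausdorff--Young inequality together with $\hat F(k)=\hat\phi(k)/(2\pi i k)$ gives
\begin{equation*}
\norm{F*K_n}_s\leq \Big(\sum_{0<\abs{k}<n}\abs{\hat F(k)}^{s'}\Big)^{1/s'}\lesssim \Big(\sum_{k=1}^{n-1}\frac{\abs{\hat\phi(k)}^{s'}}{k^{s'}}\Big)^{1/s'},
\end{equation*}
where the last step uses that $\phi$ is real, so $\abs{\hat\phi(-k)}=\abs{\hat\phi(k)}$.

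\emph{High-frequency piece.} Here the one-sided bound $\phi\geq -M$ is used. On the universal cover, $G(x):=F(x)+Mx$ is nondecreasing and satisfies $G(x+1)=G(x)+M$. Since $K_n$ is even, its first moment $\int yK_n(y)\,dy$ vanishes, so
\begin{equation*}
F(x)-F*K_n(x)=\int_{-1/2}^{1/2}\big(G(x)-G(x-y)\big)K_n(y)\,dy .
\end{equation*}
Monotonicity of $G$ and its quasi-periodicity give two facts about the increments. First, for $\abs{y}\leq 1/2$,
\begin{equation*}
\int_\T\abs{G(x)-G(x-y)}\,dx=M\abs{y} ,
\end{equation*}
because for $y>0$ the integrand is nonnegative and integrates to $G$'s total increase over a period, scaled by $y$. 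Second, $\abs{G(x)-G(x-y)}\leq M$ pointwise. The first fact together with the Jackson moment bound yields $\norm{F-F*K_n}_1\lesssim M/n$, and the second yields $\norm{F-F*K_n}_\infty\leq M$.

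\emph{Main obstacle.} Interpolating these two bounds only gives $\norm{F-F*K_n}_s\lesssim M n^{-1/s}$, not the desired $M/n$. To close this gap, I would first try the classical pointwise Erdős–Turán argument. Take $x_0$ near a point where $\abs{F-c}$ is maximal, say $F(x_0)-c=\Delta$. The slope bound $F'\geq -M$ keeps $F-c\geq \Delta/2$ on an interval of length $\approx \Delta/M$ to the left of $x_0$. Evaluating $F*K_n$ at a point of that interval shifted by about $\Delta/(4M)$, and using the decay of $K_n$, gives
\begin{equation*}
\norm{F-c}_\infty\lesssim \frac{M}{n}+\norm{F*K_n-c}_\infty .
\end{equation*}
This already settles the $L^\infty$ version, which bounds $\norm{F}_s$ on the normalized torus. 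The remaining gap is that $\norm{F*K_n}_\infty$ is controlled by an $\ell^1$ rather than an $\ell^{s'}$ Fourier sum. To upgrade it, I would run the same pointwise argument at each local extremum of $F-c$, i.e.\ on each sign interval of $\phi$, measuring the smoothed function in $L^s$ on the resulting sub-intervals. This is where I expect the real difficulty to lie, and if it fails the fallback is the weaker term $M n^{-1/s}$ in place of $M/n$.
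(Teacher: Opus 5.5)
\textbf{There is a genuine gap.} Your reduction to
$\norm{\phi}_{\dot W^{-1,s}}\lesssim_s \frac Mn+\big(\sum_{k=1}^{n-1}\abs{\hat\phi(k)}^{s'}k^{-s'}\big)^{1/s'}$
is the same reduction the paper makes. Your low-frequency Hausdorff--Young step and your $L^1$ and $L^\infty$ bounds on $F-F*K_n$ are also correct. But this inequality is the entire content of the theorem, and your proposal does not prove it. You end with $Mn^{-1/s}$, or with an $\ell^1$ Fourier sum via the pointwise route. The step ``run the argument on each sign interval'' is not yet an argument.

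The paper does not derive the bound from scratch. It applies Graham's $L^p$ Erd\H{o}s--Tur\'an inequality (Theorem~\ref{thm:ET graham}) to $d\mu_M=(1+\phi/M)\,dx$. This is a probability measure exactly because $M\geq\norm{\phi_-}_\infty$ and $\int\phi=0$. The paper then rescales using $\norm{\mu_M-\lambda}_{\dot W^{-1,s}}=\frac1M\norm{\phi}_{\dot W^{-1,s}}$ and $\hat\mu_M(k)=\hat\phi(k)/M$.

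\textbf{Why your route cannot close.} The obstruction is structural. Take $\phi$ approximating $M(\delta_0-1)$, which is allowed since $\phi\geq-M$. Then $F$ is essentially a sawtooth with an upward jump of size $M$. By Bernstein's inequality, $F-F*K_n$ has size $\gtrsim M$ on an interval of length $\approx 1/n$ next to the jump, for any band-limited kernel. Hence $\norm{F-F*K_n}_s\gtrsim Mn^{-1/s}$. The theorem survives in this example only because the low-frequency sum is already $\approx M$. So no refinement of the high-frequency bound alone can give $M/n$; the loss comes from splitting with the $L^s$ triangle inequality.

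\textbf{How to fill the gap yourself.} Measure the smoothing error in the transport distance instead.
\begin{itemize}
\item By the coupling $(X,X+Y)$, $\mathcal W_s(\mu_M,\mu_M*K_n)\leq(\int\abs{y}^sK_n(y)\,dy)^{1/s}\lesssim_s \frac1n$. This needs the $s$-th moment of $K_n$ to be $\lesssim n^{-s}$. A sufficiently high power of the Fej\'er kernel works; the Jackson kernel only works for $s<3$.
\item In one dimension, against the uniform measure, $\mathcal W_s(\nu,\lambda)=\norm{\nu-\lambda}_{\dot W^{-1,s}}$. To see this, compare the lifted distribution function with its inverse (the region between each graph and the diagonal is the same set), then use Proposition~\ref{prop:sobolev norm}.
\item Combining these, $\frac1M\norm{\phi}_{\dot W^{-1,s}}\leq \mathcal W_s(\mu_M,\mu_M*K_n)+\norm{\mu_M*K_n-\lambda}_{\dot W^{-1,s}}$. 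The last term is $\frac1M\inf_c\norm{F*K_n-c}_s$, which your Hausdorff--Young estimate already controls.
\end{itemize}

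A small slip: in your pointwise argument, $F'\geq -M$ keeps $F-c\geq\Delta/2$ to the right of a positive extremum, not to the left.
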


We will make use of the following $L^p$ Er\"os-Tur\'an inequality~\cite[Proposition 4]{graham2020irregularity}.

\begin{theorem}[\hspace{1sp}\cite{graham2020irregularity}]\label{thm:ET graham}
Let $1 < r \leq \infty$, $2\leq s < \infty$, and let $r',s'$ be their H\"older conjugates. Let $n\in\mathbb N$, let $\mu\in\mathcal P(\T)$ be a probability measure, and let $\lambda$ be normalized Lebesgue measure. Then
\begin{equation}
    \norm{\mu-\lambda}_{\dot W^{-1,s}(\T)}
 \lesssim_s \frac1n+
 \left(\sum_{k=1}^{n-1}
 \frac{|\widehat\mu(k)|^{s'}}{k^{s'}}\right)^{1/s'}.
    \label{eq:LpET}
\end{equation}
\end{theorem}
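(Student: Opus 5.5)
The plan is to prove \eqref{eq:LpET} from Proposition \ref{prop:sobolev norm}, reducing $\norm{\mu-\lambda}_{\dot W^{-1,s}}$ to an $L^s$ bound on a primitive of $\mu-\lambda$. I would approximate this primitive by a trigonometric polynomial of degree $<n$ with a \emph{one-sided, positive} error. That error is controlled by a Fej\'er mean of $\mu$, which is again a trigonometric polynomial of degree $<n$. I expect the positivity of the error kernel to be the key point, because a plain Fourier truncation of a primitive of $\mu-\lambda$ loses too much when $\mu$ has atoms.

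\textbf{Step 1 (primitive as a convolution with the sawtooth).} Let $\psi(x)=\{x\}-\tfrac12$ be the sawtooth. A primitive of $\mu-\lambda$ is $F(x)=\mu([0,x))-x$. Up to an additive constant and a.e. in $x$, one has $F=-\mu*\psi$, since $(\mu*\psi)'=\mu-\lambda$ distributionally. By Proposition \ref{prop:sobolev norm}, $\norm{\mu-\lambda}_{\dot W^{-1,s}}=\inf_c\norm{F-c}_s\le\norm{\mu*\psi}_s$.

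\textbf{Step 2 (Vaaler's approximation).} I would invoke Vaaler's lemma, which gives a trigonometric polynomial $\psi_n^*$ of degree $<n$ with $\abs{\widehat{\psi_n^*}(k)}\le \frac{1}{2\pi\abs{k}}$ for $k\neq0$, $\widehat{\psi_n^*}(0)=0$, and the pointwise bound
\[
\abs{\psi(x)-\psi_n^*(x)}\le \tfrac{1}{2n}\,K_n(x),
\]
where $K_n\ge0$ is the Fej\'er kernel of degree $<n$, with $\widehat{K_n}(0)=1$ and $0\le\widehat{K_n}(k)\le1$. Since $\mu\ge0$, integrating this bound against $\mu$ gives
\[
\abs{\mu*\psi-\mu*\psi_n^*}\le \tfrac{1}{2n}\,\mu*K_n \quad\text{pointwise.}
\]
It follows that $\norm{\mu*\psi}_s\le\norm{\mu*\psi_n^*}_s+\frac1{2n}\norm{\mu*K_n}_s$.

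\textbf{Step 3 (Hausdorff--Young on both pieces).} Since $2\le s<\infty$, we have $s'\in(1,2]$, and the Hausdorff--Young inequality $\norm{g}_s\le\norm{\hat g}_{\ell^{s'}}$ applies. The coefficients of $\mu*\psi_n^*$ are bounded by $\abs{\hat\mu(k)}/(2\pi\abs{k})$ for $0<\abs k<n$ and vanish otherwise. Using $\hat\mu(-k)=\overline{\hat\mu(k)}$, this gives
\[
\norm{\mu*\psi_n^*}_s\lesssim\Big(\sum_{k=1}^{n-1}\frac{\abs{\hat\mu(k)}^{s'}}{k^{s'}}\Big)^{1/s'}.
\]
For the error term, $\mu*K_n=1+\sum_{0<\abs k<n}\widehat{K_n}(k)\hat\mu(k)e^{2\pi i kx}$. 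Hence
\[
\tfrac1n\norm{\mu*K_n}_s\le \tfrac1n+\tfrac{2^{1/s'}}{n}\Big(\sum_{k=1}^{n-1}\abs{\hat\mu(k)}^{s'}\Big)^{1/s'}\lesssim \tfrac1n+\Big(\sum_{k=1}^{n-1}\frac{\abs{\hat\mu(k)}^{s'}}{k^{s'}}\Big)^{1/s'},
\]
where the last inequality uses $1/n\le 1/k$ for $k<n$. Combining Steps 1--3 yields \eqref{eq:LpET}.

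\textbf{Main obstacle.} The delicate step is Step 2, namely securing an error term that is dominated \emph{pointwise} by a nonnegative kernel of degree $<n$. A naive split $F=F*K_n+(F-F*K_n)$ only gives $\norm{F-F*K_n}_s\lesssim n^{-1/s}$ when $\mu$ is a Dirac mass, and it gives no gain through Hausdorff--Young either. The Vaaler (Beurling--Selberg) construction converts the error into $\frac1n\mu*K_n$. Its positivity and low degree then allow the error to be absorbed into the Fourier sum on the right-hand side.
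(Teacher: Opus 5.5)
The paper gives no proof of this statement. It is quoted from \cite[Proposition 4]{graham2020irregularity} and used as a black box in Theorem \ref{thm:uncertainty}. Your proposal is therefore a self-contained replacement rather than a reconstruction, and it is correct.

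With $H=n-1$, Vaaler's lemma gives exactly $\abs{\psi-\psi_n^*}\le\frac{1}{2n}K_n$, where $K_n(x)=\sum_{\abs{k}<n}\bigl(1-\frac{\abs{k}}{n}\bigr)e^{2\pi i kx}\ge 0$. For $n=1$, take $\psi_1^*=0$ and $K_1=1$. The bound holds at every $x$, including the jump of $\psi$, so integrating it against $\mu\ge0$ is legitimate. Both Hausdorff--Young steps are fine, as is the absorption $\frac1n\bigl(\sum_{k=1}^{n-1}\abs{\hat\mu(k)}^{s'}\bigr)^{1/s'}\le\bigl(\sum_{k=1}^{n-1}\abs{\hat\mu(k)}^{s'}k^{-s'}\bigr)^{1/s'}$. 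You in fact obtain an absolute constant.

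One harmless slip: since $\psi'=1-\delta_0$, you have $(\mu*\psi)'=\lambda-\mu$, not $\mu-\lambda$. This sign is what makes $F=-\mu*\psi+c$ consistent with $F'=\mu-\lambda$. Only $\norm{\mu*\psi}_s$ is used in any case.

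For comparison, consider the optimal-transport argument that fits the Wasserstein setting of that source. One smooths $\mu$ by a nonnegative kernel $K$ with $\hat K$ supported in $(-n,n)$ and $\int_\T\abs{x}^sK(x)\,dx\lesssim_s n^{-s}$, for instance a suitable power of the Fej\'er kernel. Then $\mathcal W_s(\mu,\lambda)\le\mathcal W_s(\mu,\mu*K)+\mathcal W_s(\mu*K,\lambda)$. The first term is at most $\bigl(\int_\T\abs{x}^sK\bigr)^{1/s}\lesssim_s 1/n$. The second is handled by Hausdorff--Young through the identity $\mathcal W_s(\nu,\lambda)=\norm{\nu-\lambda}_{\dot W^{-1,s}}$, valid on $\T$ for every probability measure $\nu$.

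In that route the gain $1/n$ comes from the nonlinear triangle inequality for $\mathcal W_s$. As you observe, the linear seminorm cannot produce it, since $\norm{\delta_0-K}_{\dot W^{-1,s}}\approx n^{-1/s}$. The price is the identification of $\mathcal W_s$ with $\dot W^{-1,s}$ and an $s$-dependent kernel. Your route replaces both with the pointwise positivity of Vaaler's majorant. It stays entirely within Fourier analysis and the linear seminorm, which also fits the paper's stated aim of being independent of optimal transport.
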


\begin{proof}[Proof of Theorem \ref{thm:uncertainty}]

We prove \eqref{eq:ET main} for $\Rphi = \frac{\norm{\phi}_p\norm{\phi}_q}{\norm{\phi}_r}$. The other bound is similar using Theorem \ref{thm:critical L1Lr}.

Let $M$ be as in the hypothesis. Define
\begin{equation}
 d\mu_M(x)=\left(1+\frac{\phi(x)}M\right)d x.
 \label{eq:dmuM}
\end{equation}
The density in \eqref{eq:dmuM} is nonnegative by the choice of $M$. Since $\int_\T\phi=0$,
\[
 \mu_M(\T)=\int_\T\left(1+\frac\phi M\right)d x=1,
\]
so $\mu_M$ is a probability measure. Homogeneity of the negative Sobolev norm gives
\begin{equation}
     \norm{\mu_M-\lambda}_{\dot W^{-1,s}}
 =\frac1M\norm\phi_{\dot W^{-1,s}}.
 \label{eq:normalization - hom}
\end{equation}
For $k\neq0$, we have $\widehat\lambda(k)=0$ , so
\begin{equation}
     \widehat{\mu_M}(k)=\frac{\widehat\phi(k)}M.
     \label{eq:normalization - hom - 2}
\end{equation}
Substitute \eqref{eq:normalization - hom} and \eqref{eq:normalization - hom - 2} into \eqref{eq:LpET} and multiply by $M$, we have
\begin{equation}
    \norm{\phi}_{\dot W^{-1,s}}
 \lesssim_s \frac{M}{n}+
 \left(\sum_{k=1}^{n-1}
 \frac{|\widehat\phi(k)|^{s'}}{k^{s'}}\right)^{1/s'}
 \text{\quad for $s\in [2,\infty)$.}
 \label{eq:normalization key}
\end{equation}

Combining Theorem \ref{thm:main} and Lemma \eqref{eq:normalization key}, we have
\[
 \frac{\Rphi}{\nsi}
 \lesssim_s
 \norm\phi_{\dot W^{-1,s}}
 \lesssim_s
 \frac{M}{n}+
 \left(\sum_{k=1}^{n-1}
 \frac{\abs{\hat\phi(k)}^{s'}}{k^{s'}}\right)^{1/s'}.
\]
This completes the proof.
\end{proof}

\begin{remark}
    We note a subtle difference between Theorem \ref{thm:low frequency} and Theorem \ref{thm:uncertainty}. In Theorem \ref{thm:low frequency}, we assume that $\phi \in \FL^{s'}_m$ and control the high-frequency tail by $n^{-(m+1)}\norm{\phi}_{\FL^{s'}_m}$, which improves for smoother $\phi$. On the other hand, Theorem \ref{thm:uncertainty} demands minimal regularity for $\phi$, so the tail (discrepancy) is a fixed error $\frac{\norm{\phi_-}_\infty}{n}$ which is independent of higher regularity of $\phi$. 
\end{remark}

\subsection{Sturm-Hurwitz theorem with small perturbation}

Recall that classical Sturm-Hurwitz Theorem~\cite{Sturm1,Sturm2}.

\begin{theorem}[Sturm-Hurwitz Theorem]
    If $u$ is a nonzero continuous real-valued function on $\T$ with a spectral gap $[-M,M]$, $M > 0$, then $u$ has at least $2M$ sign changes on $\T$. 
\end{theorem}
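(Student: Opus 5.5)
We plan to prove the Sturm-Hurwitz theorem by the classical integration-by-parts argument, interpreting ``spectral gap $[-M,M]$'' as $\hat u(k)=0$ for all integers $\abs{k}<M$ (in particular $u$ has mean zero). The idea is to suppose $u$ has $N<2M$ sign changes and build a trigonometric polynomial $T$ of degree $<M$ that matches the sign pattern of $u$. Then $\int_\T uT>0$, while $\int_\T uT=0$ by the spectral gap, which is a contradiction.

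\textbf{Step 1: sign changes come in even number.} On the circle the number $\nsi$ of sign changes of a continuous $u$ is even: going once around $\T$ must return the sign to its starting value. If $\nsi=0$, then $u$ has constant sign, and $\int_\T u=\hat u(0)=0$ forces $u\equiv0$, which is a contradiction since $u$ is nonzero. So we may write $\nsi=2J$ with $1\le J$, and suppose for contradiction that $2J<2M$, i.e.\ $J<M$. Let $x_1<\dots<x_{2J}$ be points where $u$ changes sign, chosen so that $u$ has constant (weak) sign on each arc $(x_i,x_{i+1})$, with opposite signs on consecutive arcs.

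\textbf{Step 2: the comparison polynomial.} Define
\begin{equation*}
T(x)=\prod_{i=1}^{2J}\sin\bigl(\pi(x-x_i)\bigr),
\end{equation*}
which is $1$-periodic. Since each factor is a combination of $e^{\pm\pi i x}$, the product is a combination of $e^{2\pi i k x}$ with $\abs{k}\le J$. The function $T$ vanishes exactly at the $x_i$ and changes sign at each of them, because each zero is simple. Hence, after replacing $T$ by $\pm T$, the product $uT\ge0$ on $\T$. Because $u\not\equiv0$ is continuous, $uT>0$ on some open set, so $\int_\T uT>0$.

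\textbf{Step 3: contradiction.} Since $J<M$, every frequency $k$ appearing in $T$ satisfies $\abs{k}\le J<M$. By Parseval, $\int_\T uT=\sum_{\abs{k}\le J}\hat u(k)\overline{\hat T(k)}=0$, because $\hat u(k)=0$ on that range. This contradicts Step 2, so $\nsi\ge 2M$.

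\textbf{Main obstacle.} The delicate point is the bookkeeping in Steps 1 and 2: the sign changes must be chosen so that $u$ genuinely alternates in sign on consecutive arcs. Zeros of higher multiplicity, or intervals where $u$ vanishes identically, must not be counted. We handle this by taking sign-change points to be boundaries between maximal sign intervals, i.e.\ the arcs $I_j$ used in the proof of Theorem \ref{thm:critical L1Lr}. A second delicate point is the interpretation of the gap when $M$ is an integer, where the hypothesis excludes $\abs{k}<M$. The argument above only needs $J\le M-1$, so it yields the stated conclusion $\nsi\ge2M$ in both cases.
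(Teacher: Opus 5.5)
The paper does not prove this statement. It recalls it as the classical Sturm--Hurwitz theorem with a citation and uses it only as a black box in the two examples at the end, so there is no argument in the paper to compare against.

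Your proposal supplies a self-contained proof, and it is correct. It is the standard orthogonality argument:
\begin{itemize}
\item since the number of factors $2J$ is even, $T(x)=\prod_{i=1}^{2J}\sin(\pi(x-x_i))$ is $1$-periodic;
\item its expansion involves only $e^{i\pi m x}$ with $m\in\{-2J,-2J+2,\dots,2J\}$, i.e.\ frequencies $\abs{k}\le J$;
\item its zeros on $\T$ are exactly the simple zeros $x_i$, so $\pm T$ reproduces the alternating weak sign pattern of $u$;
\item hence $\int_\T uT>0$, which contradicts $\hat u(k)=0$ for $\abs{k}\le J<M$.
\end{itemize}
Your reading of the gap, $\hat u(k)=0$ for $\abs{k}<M$, matches how the paper uses the theorem: in its examples, first nonzero frequency $k$ yields at least $2k$ sign changes. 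Under a closed-gap reading the hypothesis is only stronger, so the conclusion still holds.

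A few small points should be tidied.

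\textbf{Infinitely many sign changes.} This case is trivial, but dismiss it explicitly, since Step 1 presupposes finitely many arcs.

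\textbf{Existence of the points $x_i$.} This deserves the one-line maximality argument rather than a pointer to the paper's informal ``sign intervals''. Take a maximal cyclically alternating family $y_1<\dots<y_{2J}$ with, say, $u(y_i)>0>u(y_{i+1})$, and set $x_i=\sup\{x\in[y_i,y_{i+1}]:u(x)>0\}$. Then $u\le 0$ on $(x_i,y_{i+1}]$. A negative value of $u$ in $[y_i,x_i)$ would create two extra alternations, contradicting maximality.

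\textbf{The closing remark about $J\le M-1$.} You say the argument ``only needs $J\le M-1$'', but that is equivalent to what Step 3 actually uses, namely $J<M$, only when $M$ is an integer. For non-integer $M$ the condition $J\le M-1$ can fail: take $M=5/2$ and $J=2$. Step 3 still goes through verbatim, since every $\abs{k}\le 2$ lies in the gap. The remark should simply say that $J<M$ is what is needed.
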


The Sturm-Hurwitz theorem involves an exact spectral gap. We prove a robust version of this theorem when low-frequency contamination is present but small in the Fourier-Lebesgue seminorm. 

\begin{theorem}\label{thm:sturm contamination}
    Let $2\leq s < \infty$.
    Suppose a zero-mean function $\phi \in C(\T)$ can be written as
    \begin{equation}
        \phi = h + g
    \text{\quad where $\hat h(k) = 0$ for $0 < \abs{k} < M$, and $\hat h, \hat g \in \ell^{s'}(\mathbb Z)$.}
    \label{eq:spectral gap}
    \end{equation}
    Then
    \begin{equation*}
        \nsi \gtrsim_{s,r} \frac{\mathcal R(\phi)}{
        M^{-1}\norm{\hat h}_{\ell^{s'}}
         + \norm{g}_{\FL^{s'}_{-1}}
        }
    \end{equation*}
    with $\Rphi$ as in \eqref{eq:Rphi}.
    
    In particular, if $\norm{\phi}_1 \approx \norm{\phi}_r \approx \norm{\hat h}_{\ell^{s'}} \approx 1$ and $\norm{g}_{\FL^{s'}_{-1}}\lesssim M^{-1}$, then $\nsi \gtrsim M$. 
\end{theorem}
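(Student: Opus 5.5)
The plan is to bound $\norm{\phi}_{\dot W^{-1,s}}$ from above by the denominator and then invoke Theorem \ref{thm:main} (or Theorem \ref{thm:critical L1Lr}, depending on which form of $\Rphi$ in \eqref{eq:Rphi} is used). That is, I aim to show
\begin{equation*}
    \norm{\phi}_{\dot W^{-1,s}} \lesssim M^{-1}\norm{\hat h}_{\ell^{s'}} + \norm{g}_{\FL^{s'}_{-1}},
\end{equation*}
after which $\Rphi \lesssim \nsi\norm{\phi}_{\dot W^{-1,s}}$ immediately gives the claimed lower bound on $\nsi$.

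For the upper bound, I take $F$ to be the mean-zero periodic primitive of $\phi$, so that $\hat F(k) = \hat\phi(k)/(2\pi i k)$ for $k\neq 0$. This is exactly as in the proof of Theorem \ref{thm:low frequency}.

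\begin{enumerate}
\item By Proposition \ref{prop:sobolev norm}, $\norm{\phi}_{\dot W^{-1,s}} \leq \norm{F}_s$.
\item Since $2\le s<\infty$, Hausdorff--Young gives $\norm{F}_s \le \norm{\hat F}_{\ell^{s'}} = \frac{1}{2\pi}\norm{\phi}_{\FL^{s'}_{-1}}$.
\item Note that $h$ and $g$ need not have zero mean individually; only their sum does. The $k=0$ mode never enters the $\FL^{s'}_{-1}$ seminorm, so this causes no trouble. Splitting $\hat\phi(k) = \hat h(k) + \hat g(k)$ for $k\ne0$, Minkowski's inequality in $\ell^{s'}$ gives $\norm{\phi}_{\FL^{s'}_{-1}} \le \norm{h}_{\FL^{s'}_{-1}} + \norm{g}_{\FL^{s'}_{-1}}$, where both seminorms are finite because $\hat h, \hat g\in \ell^{s'}$ and $|k|^{-1}\le 1$.
\item For the $h$ term, the spectral gap forces $\hat h(k)=0$ for $0<|k|<M$, so every surviving term has $|k|\ge M$. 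Hence $\norm{h}_{\FL^{s'}_{-1}} \le M^{-1}\norm{\hat h}_{\ell^{s'}}$, by the same tail estimate as \eqref{eq:low freq - 2} with $m=0$.
\end{enumerate}
Chaining these four steps gives the displayed upper bound, and combining with the lower bound from Theorem \ref{thm:main} proves the main inequality.

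For the ``in particular'' clause, I take $\Rphi = \norm{\phi}_1^{1+r'/s}/\norm{\phi}_r^{r'/s}$. Under the hypotheses $\norm{\phi}_1\approx\norm{\phi}_r\approx 1$ this gives $\Rphi\approx 1$, while the denominator is $\lesssim M^{-1}$ because $\norm{\hat h}_{\ell^{s'}}\approx 1$ and $\norm{g}_{\FL^{s'}_{-1}}\lesssim M^{-1}$. Therefore $\nsi\gtrsim M$.

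The argument is essentially routine. The only point needing care is the justification of Hausdorff--Young for $F$: $F$ is continuous, hence in $L^2$, and $\hat F\in\ell^{s'}$, so the dual Hausdorff--Young bound $\norm{F}_s\le\norm{\hat F}_{\ell^{s'}}$ applies for $s'\in(1,2]$. The implicit constants depend only on $s$ and on $p,q,r$ through Theorem \ref{thm:main}.
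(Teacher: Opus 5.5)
Your proposal is correct and follows essentially the same route as the paper: bound $\norm{\phi}_{\dot W^{-1,s}}$ by $\frac{1}{2\pi}\norm{\phi}_{\mathcal FL^{s'}_{-1}}$ via Proposition~\ref{prop:sobolev norm} and Hausdorff--Young, split with the triangle inequality, use the spectral gap to get $\norm{h}_{\mathcal FL^{s'}_{-1}}\le M^{-1}\norm{\hat h}_{\ell^{s'}}$, and conclude with Theorem~\ref{thm:main} (or Theorem~\ref{thm:critical L1Lr}). Your version of the tail estimate, with the $1/s'$ power and $\hat h(k)$ where the paper's display has typographical slips, and your remark that the $k=0$ mode of $h$ and $g$ never enters the seminorm, are the right details to check.
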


\begin{proof}
    We prove the estimate for $\Rphi = \frac{\norm{\phi}_p\norm{\phi}_q}{\norm{\phi}_r}$. The other bound is similar using Theorem \ref{thm:critical L1Lr}.
    
    By Theorem \ref{thm:main}, we have
    \begin{equation}
        N_\phi \gtrsim_{s,r}\frac{\Rphi}{\norm{\phi}_{\dot W^{-1,s}}}.
        \label{eq: low freq contamination - 0}
    \end{equation}
    As in the proof of Theorem \ref{thm:low frequency}, we invoke the (reverse) Hausdorff-Young theorem to obtain
    \begin{equation}
        \norm{\phi}_{\dot W^{-1,s}} \leq \frac{1}{2\pi} \norm{\phi}_{\FL^{s'}_{-1}} \leq 
        \frac{1}{2\pi}\brac{\norm{h}_{\FL^{s'}_{-1}} + \norm{g}_{\FL^{s'}_{-1}}}.
        \label{eq: low freq contamination - 1}
    \end{equation}
    By the spectral gap assumption in \eqref{eq:spectral gap}, 
    \begin{equation}
        \norm{h}_{\FL^{s'}_{-1}} = \sum_{\abs{k} \geq M}\frac{\abs{\hat k}^{s'}}{\abs{k}^{s'}} \leq M^{-s'}\sum_{\abs{k} \geq M}\abs{\hat h(k)}^{s'}= M^{-s'}\norm{\hat h}_{\ell^{s'}}^{s'}.
        \label{eq: low freq contamination - 2}
    \end{equation}
    The conclusion follows from \eqref{eq: low freq contamination - 0}--\eqref{eq: low freq contamination - 2}. 
\end{proof}

\subsection{Two examples}
\label{sect:zero count}

\begin{example}\label{}
    Let $0  < \eps \ll 1$ and consider the periodic bump function
    \begin{equation*}
        F_\eps(x) = \eps \exp{\brac{\frac{\cos 2\pi x - 1}{\eps^2}}}
    \end{equation*}
    and set
    \begin{equation*}
        g_\eps(x) = F_\eps'(x) = -\frac{2\pi}{\eps}\sin(2\pi x)\exp\brac{\frac{\cos(2\pi x) - 1}{\eps^2}}.
    \end{equation*}
    For $k \in \mathbb N > 0$, define
    \begin{equation*}
        \phi_{k,\eps}(x) := g_\eps(kx).
    \end{equation*}
    A routine calculation shows that $\int_\T g_\eps = 0$ and $g_\eps$ has exactly two zeros at $0$ and $\frac{1}{2}$. Moreover, 
    \begin{equation}
        \text{$\norm{g_\eps}_p \approx \eps^{1/p}$ for $1 \leq p \leq \infty$ \quad and \quad $\norm{g_\eps}_{\dot W^{-1,s}}\approx \eps^{1+1/s}$.}
        \label{eq:g eps}
    \end{equation}
    
    Since $\phi_{k,\eps}$ goes around the circle $k$ times and all zeroes are nondegenerate, the number of zeroes of $\phi_{k,\eps}$ is exactly $2k$. Choose the scale $\eps$ and the frequency $k$ simultaneously
    \begin{equation*}
        \eps_k = \frac{1}{k^2}
    \end{equation*}
    so that
    \begin{equation*}
        \phi_k(x) := -2k^2\pi\sin(2k\pi x)\exp\left[k^4(\cos(2 k \pi  x) - 1)\right].
    \end{equation*}

    To begin with the Sturm-Hurwitz estimate, we claim that the Fourier expansion begins at frequency $k$. To see this, we begin with $F_\eps(x)$ and notice that
    \begin{equation}
        \exp(\eps^{-2}\cos(2\pi x)) = I_0(\eps^{-2}) + 2\sum_{m=1}^\infty I_m(\eps^{-2})\cos(2m\pi x)
        \label{eq:Bessel}
    \end{equation}
    where $I_m$ is the modified Bessel function of the first kind. Differentiating \eqref{eq:Bessel} gives a sine series whose first coefficient is nonzero. After the composition $x \mapsto kx$, the first nonzero Fourier frequency of $\phi_k$ is precisely $k$, so the Sturm-Hurwitz theorem gives 
    \begin{equation*}
        N_{\phi_k}^{(\text{SH})} \geq 2k
    \end{equation*}
    which is sharp.

    On the other hand, plugging \eqref{eq:g eps} into \eqref{eq:Steinerberger} gives the estimate
    \begin{equation*}
        N_{\phi_{k}}^{\eqref{eq:Steinerberger}} \gtrsim \eps^{1/2}k = 1.
    \end{equation*}
    Plugging \eqref{eq:beta critical intro} with $(r,s) = (\infty,2)$ gives
    \begin{equation*}
        N_{\phi_{k}}^{\eqref{eq:beta critical intro}} \gtrsim k. 
    \end{equation*}

\end{example}

    \begin{example}
        We modify the exponential part of the example above and define
        \begin{equation*}
            \psi_k(x) = -2k^2\pi\sin(2k\pi x)\exp\left[k^4(\cos(2 \pi  x) - 1)\right].
        \end{equation*}
        The exponential part is strictly positive, so the zeros are induced by $\sin(2k\pi x)$, and there are exactly $2k$ of them, but the majority of them correspond to small amplitudes. 

        We begin with the Sturm-Hurwitz estimate. Set $\theta:=2\pi x$ and $a := k^4$, so 
        \begin{equation*}
            \psi_k(x) = (-2k^2\pi )e^{-a}q_k(\theta)
            \text{\quad where \quad}
            q_k(\theta):= \sin(k\theta)e^{a\cos\theta}.
        \end{equation*}
        Using the modified Bessel functions of the first kind as in the previous example,
        \begin{equation*}
            \begin{split}
                q_k(\theta) &= I_0(a)\sin(k\theta) +2\sum_{m=1}^\infty I_m(a)\sin(k\theta)\cos(m\theta)
                \\
                &= I_0(a)\sin(k\theta) + \sum_{m=1}^\infty
                I_m(a)[
                \sin((k+m)\theta) + \sin((k-m)\theta)
                ].
            \end{split}
        \end{equation*}
        Note that there are exactly two terms that can contribute to frequency $1$ in the sum, and they are given by $m = k\pm 1$, so the Fourier coefficient corresponding to $\sin(2\pi x)$ in $q_k$ is given by
        \begin{equation*}
            I_{k-1}(a) - I_{k+1}(a) = I_{k-1}({k^4}) - I_{k+1}(k^4) = \frac{2k}{k^4}I_k(k^4) > 0,
        \end{equation*}
        where we use the Bessel recurrence relation in the second equality. Therefore, by the Sturm-Hurwitz theorem,
        \begin{equation*}
            N_{\phi_k}^{\text{(SH)}} \geq 2.
        \end{equation*}

        Now we move on to estimates using \eqref{eq:Steinerberger} and \eqref{eq:beta critical intro}. We only consider the cases where $k$ is sufficiently large. 

        Near $x = 0$, we have
        \begin{equation*}
            k^4(\cos(2\pi x) - 1) = -2\pi^2 k^4 x^2 + O(k^4x^4)
        \end{equation*}
        so the majority of the mass of the exponential function is concentrated in a region with width $\approx k^{-2}$. On the other hand,
        \begin{equation*}
            \sin(2k\pi x) = 2k \pi x + O(k^3\abs{x}^3).
        \end{equation*}
        As a consequence, 
        \begin{equation}
            \norm{\psi_k}_{p} \approx _p k^{1-2/p}
            \text{\quad for $1\leq p \leq  \infty$.}
            \label{eq:Psik-Lp}
        \end{equation}
        We claim that 
        \begin{equation}
            \norm{\psi_k}_{\dot H^{-1}} \approx k^{-2}.
            \label{eq:Psik-H1}
        \end{equation}
        Assuming the validity of \eqref{eq:Psik-H1}, we plug \eqref{eq:Psik-Lp} and \eqref{eq:Psik-H1} into \eqref{eq:Steinerberger} to obtain
        \begin{equation*}
            N_{\phi_k}^{\eqref{eq:Steinerberger}} \gtrsim k^{-1}
        \end{equation*}
        which is asymptotically vacuous. On the other hand, \eqref{eq:beta critical intro} gives
        \begin{equation*}
            N_{\phi_k}^{\eqref{eq:beta critical intro}} \gtrsim 1.
        \end{equation*}

        To show \eqref{eq:Psik-H1}, consider the rescaled variable
        \begin{equation*}
            x = \frac{y}{k^2}.
        \end{equation*}
        Then $\sin(2\pi kx) = 2k^{-1}\pi y + O(k^{-3}\abs{y}^3)$ and $k^4(\cos(2\pi x)-1) = -2\pi^2y^2 + O(k^{-4}y^4)$. Therefore, for a fixed $y$,
        \begin{equation}
            \lim_{k\to\infty}\frac{1}{k}\psi_k\brac{\frac{y}{k^2}} = -4\pi^2y e^{-2\pi^2 y^2}.
            \label{eq: convergence k - 1}
        \end{equation}
        Heuristically speaking, $\psi_k$ has height $\approx k$, total mass $\approx k^{-1}$, with majority of the mass concentrated in an interval of width $\approx k^{-2}$.

        For convenience, identify $\T \cong [-1/2,1/2)$ and let 
        \begin{equation}
            P_k(x) := \int_{-1/2}^x \psi_k(\xi)d\xi
            \quad \implies \quad
            kP_k\brac{\frac{y}{k^2}} = \int_{-k^2/2}^y \frac{1}{k}\psi_k\brac{\frac{z}{k^2}}dz.
            \label{eq: convergence k - 2}
        \end{equation}
        Note that $P_k$ is periodic since $\psi_k$ is odd. Using \eqref{eq: convergence k - 1}, \eqref{eq: convergence k - 2}, we have
        \begin{equation}
            \lim_{k\to\infty} kP_k\brac{\frac{y}{k^2}} = \int_{-\infty}^y 2\pi^2z e^{-2\pi^2z^2}dz = e^{-2\pi^2y^2}.
            \label{eq:limit Pk}
        \end{equation}
        Heuristically speaking, for large $k$, $P_k$ has height $\approx k^{-1}$ and the majority of its mass is concentrated on an interval of width $\approx k^{-2}$. This gives
        \begin{equation}
            \int_\T P_k \approx k^{3}.
            \label{eq:Pk average}
        \end{equation}
        Now we estimate the $L^2$ mass of $P_k$ for large $k$:
        \begin{equation}
            \norm{P_k}_2^2 = \frac{1}{k^2}\int_{-k^2/2}^{k^2/2}
            \abs{P_k\brac{\frac{y}{k^2}}}^2 dy \to \frac{1}{k^4}\int_\R e^{-4\pi^2y^2}dy \approx k^{-4}.
            \label{eq:Pk L2}
        \end{equation}
        The convergence is justified since $\abs{kP_k\brac{\frac{y}{k^2}}} \leq C_1e^{-C_2 y^2} \in L^1$ for $\abs{y}\leq k^2/2$.
        
        Finally, since we are working in $L^2$, \eqref{eq:Pk average} and \eqref{eq:Pk L2} implies
        \begin{equation*}
            \norm{\psi_k}_{\dot H^{-1}}^2 = \inf_{c\in \R}\norm{P_k - c}_2^2 = 
            \norm{P_k}_2^2 - \abs{\int_\T P_k}^2 \approx k^{-4}. 
        \end{equation*}
        Hence, \eqref{eq:Psik-H1} holds.

    \end{example}

\subsection{The potential $n$-dimensional problem}
\label{sect:higher dimensional}

Let $\T^n \cong [0,1)^n$ denote the normalized $n$-dimensional torus, and let $\phi:\T^n \to \R$ with $\int_{\T^n}\phi = 0$. We seek a bound of the form \eqref{eq:beta critical intro} with suitable modifications. In~\cite{steinerberger-interface} and a subsequent improvement~\cite{carroll2020enhanced}, the quantity $\#\set{\phi = 0}$ from \eqref{eq:Steinerberger} was replaced by $\mathcal H^{n-1}(\set{\phi = 0})$, and the authors showed that
\begin{equation*}
    \mathcal H^{n-1}(\set{\phi = 0}) \cdot \mathcal W_1(\phi_+,\phi_-) \gtrsim_n \brac{\frac{\norm{\phi}_{L^1(\T^n)}}{\norm{\phi}_{L^\infty(\T^n)}}}^{2-1/n}\norm{\phi}_{L^1(\T^n)}
\end{equation*}
with reasonable assumption on $\set{\phi=0}$.
Here and below, $\mathcal H^{n-1}$ 
is the $(n-1)$-dimensional Hausdorff measure and $\mathcal W_1(\phi_+,\phi_-)$ 
is the $1$-Wasserstein distance between the positive and negative parts of $\phi$, which coincides with $\norm{\phi}_{\dot W^{-1,1}}$. It is also shown in \cite{steinerberger2020metric}, using a purely planar argument, that the exponent $2-1/n$ can be replaced by $1$ when $n = 2$, which is sharp for $s=1$ in view of Theorem \ref{thm:critical L1Lr} and Remark \ref{rem:sharp critical cases}. The sharpness has also been pointed out in~\cite[Proposition 5]{carroll2020enhanced}.
% For $n \geq 3$, the exponent $4-1/n$ has been improved to $2-1/n$ in~\cite{carroll2020enhanced}. 

To formalize our question, we first replace $\mathcal H^{n-1}(\set{\phi = 0})$ by a more measure-theoretically stable notion, namely, the perimeter
\begin{equation*}
    \mathsf{Per}\set{\phi > 0} = \mathcal H^{n-1}(\partial^e\set{\phi > 0}),
\end{equation*}
where $\partial^e$ is the essential (or measure-theoretic) boundary
\begin{equation*}
    \partial^e S := \set{x \in \T^n : \limsup_{r \downarrow 0}
    \frac{\abs{S\cap B(x,r)}}{\abs{B(x,r)}} > 0 \text{ and }
    \limsup_{r \downarrow 0}
    \frac{\abs{S^c\cap B(x,r)}}{\abs{B(x,r)}} > 0
    }.
\end{equation*}

\begin{problem}
Let $1\leq s < \infty$ and $1 < r\leq \infty$.
Does the estimate
\begin{equation}
    \brac{\mathsf{Per}\set{\phi > 0} + \mathsf{Per}\set{\phi < 0}} \cdot \norm{\phi}_{\dot W^{-1,s}(\T^n)} \gtrsim_{n,r,s}
    \brac{\frac{\norm{\phi}_{L^1(\T^n)}}{\norm{\phi}_{L^r(\T^n)}}}^{r'/s}\norm{\phi}_{L^1(\T^n)}
    \label{eq:nd discussion}
\end{equation}
hold uniformly for all zero-mean $\phi \in C(\T^n)$?
\end{problem}

% We note that the suggested scaling in \eqref{eq:nd discussion} is again determined by testing a function of the form $\phi_\eps = \psi(\eps^{-1}(x-x_0))$ where $\psi \in C^\infty$ is compactly supported in $(0,1)^n$ with well behaved $\set{\psi > 0}$. We then have $\norm{\phi_\eps}_p \approx A\eps^{n/p}$, $\norm{\phi_\eps}_{\dot W^{-1,s}}\approx A\eps^{1+n/s}$, and $\mathsf{Per}\set{\phi > 0} \approx \eps^{n-1}$.  

% As mentioned earlier, the case $(n,s)=(2,1)$ has been handled by \cite{steinerberger2020metric} for nondegenerate $\phi$. However, the general case may require substantially new insight.

\bibliographystyle{plain}
\bibliography{Fourier}

\end{document}